\documentclass{elsarticle}
\usepackage[utf8]{inputenc}
\usepackage[english]{babel}
\usepackage{fullpage}
\usepackage{amsmath, amsfonts, amssymb, amsthm, bm, xcolor, graphicx, hyperref}
\usepackage{capt-of}
\usepackage{changes}

\newcommand{\bea}{\begin{eqnarray}}
\newcommand{\eea}{\end{eqnarray}}
\newcommand{\beas}{\begin{eqnarray*}}
\newcommand{\eeas}{\end{eqnarray*}}
\newcommand{\be}{\begin{equation}}
\newcommand{\ee}{\end{equation}}
\newcommand{\bfR}{\mathbb{R}}

\newcommand{\Cc}{\mathbb{C}}
\newcommand{\colspan}{\mathrm{colspan}}

\newcommand{\jend}{j_{\textsf{end}}}
\newcommand{\jstart}{j_{\textsf{start}}}

\newcommand{\Rr}{\mathbb{R}}
\newcommand{\tol}{\texttt{tol}}

\newtheorem{proposition}{Proposition}[section]

\newtheorem{definition}{Definition}[section]

\newtheorem{remark}[definition]{Remark}
\renewcommand{\le}{\leqslant}
\renewcommand{\ge}{\geqslant}

\begin{document}

\begin{frontmatter}

\title{A single LU~factorization approach to model reduction\\for linear
system families with a quadratic matrix pencil}


\author[2]{Mikhail A. Botchev\corref{cor1}}
\ead{botchev@kiam.ru}

\author[1]{Leonid Knizhnerman}
\ead{lknizhnerman@gmail.com}

\author[1]{Nikolai L. Zamarashkin}
\ead{nikolai.zamarashkin@gmail.com}

\author[1]{Dmitry A. Zheltkov}
\ead{dmitry.zheltkov@gmail.com}

\affiliation[2]{organization={Keldysh Institute of Applied Mathematics of Russian Academy of Sciences},
addressline={Miusskaya Sq.~4},
city={Moscow},
postcode={125047},
country={Russia}}

\affiliation[1]{organization={Marchuk Institute of Numerical Mathematics of Russian Academy of Sciences},
addressline={Gubkina St.~8},
city={Moscow},
postcode={119333},
country={Russia}}

\cortext[cor1]{Corresponding author}

\begin{abstract}
To solve a family of large linear systems where
the system matrix depends on the family parameter quadratically, 
we propose a new solution approach.  
Our method employs the derivatives of solutions with respect to the parameter.
%
%
%
It turns out that the accumulated solution derivatives of increasing order, 
evaluated at a single specially chosen parameter value, 
provide successively better approximations to solution subspaces 
for the whole linear system family.

We show that the derivatives can be computed by a recurrence,
involving, at each step, solving a linear system with the same matrix.
Constructing an orthonormal basis for the subspace spanned by the derivatives 
then leads to an efficient solver where 
just a single (sparse) LU factorization 
(or a single preconditioner setup in an iterative scheme) 
suffices to solve the whole linear system family.
We call the new method ODB (orthonormal derivative basis method).
As shown in numerical tests, compared to conventional solution methods, 
the ODB method allows to drastically reduce computational costs.
\end{abstract}

\begin{keyword}
model order reduction \sep 
quadratic pencils \sep
vector Helmholtz equation \sep 
parameterized problem  
\end{keyword}

\end{frontmatter}


\section{Introduction}
In this paper we propose a method to solve a family of linear systems
\begin{equation}
\label{mor}
(A_0 + kA_1 + k^2 A_2) \bm{x} = k\bm{b},
\end{equation}
where parameter values $k>0$, 
$N\times N$ matrices $A_0$, $A_1$, $A_2$ and the right hand side vector
$\bm{b}\in\Cc^N$ are given.
Linear system families~\eqref{mor} arise in different applications and, in particular,
in numerical solution of the Maxwell equations.  
Although particular properties of the matrices $A_j$ may vary, 
depending on application, in the context of discretized PDEs typically
$A_0$ is a stiffness matrix, $A_1$ contains contributions
of the boundary terms, and $-A_2$ is a mass matrix.
More specifically, for electromagnetic simulation problems considered in this paper 
and described below in Section~\ref{s:problem}, we have 
\begin{equation}
\label{mor_prop}
\begin{gathered}
\Rr^{N\times N}\ni A_0=A_0^T\ge 0 , 
\qquad 
A_1=iS, \quad \Rr^{N\times N}\ni S=S^T\ge 0, 
\\
A_2 = -M + i M_{\delta}, 
\quad M,M_\delta\in\Rr^{N\times N}, 
\quad M=M^T > 0, 
\quad M_\delta=M_\delta^T \geqslant 0,
\end{gathered}
\end{equation}
with $X=X^T> 0$ and $X=X^T\ge 0$ meaning throughout this paper that a matrix $X$ 
is symmetric positive definite and, respectively, semidefinite,
and $i$ being the imaginary unit ($i^2 = -1$).
The systems in~\eqref{mor} have to be solved for 
$J\geqslant 2$ given positive values of the parameter~$k$, i.e., for
\begin{equation}
\label{mor_k}
k\in\{k_1,k_2,\dots,k_J\},\quad\text{with}\quad
0<k_{\min}=k_1<k_2<\dots<k_J=k_{\max}.
\end{equation}
In typical settings the system size~$N$ can be very large, having 
$N\sim 10^8$ is not unusual. In addition, the linear systems in~\eqref{mor} 
may have to be solved not only for different values of the parameter~$k$ but also 
for up to hundreds various right hand side vectors~$\bm{b}$.

Due to the large system size and necessity to solve systems~\eqref{mor}
for many various $k$ parameter values and right hand sides,
a usual way to efficiently handle the linear system family in~\eqref{mor}
is to apply well developed model-order reduction (MOR) 
techniques, see, e.g.,~\cite{Schilders_ea2008book,QuarteroniRozza2014book}.
Last decades the MOR methods have formed an actively developing research
area.
We mention classical MOR solution approaches, such as
Principal Component Analysis (PCA) and Proper Orthogonal Decomposition (POD)
methods initiated in~\cite{Moore1981,Sirovich1987}
and Krylov subspace methods, including the PVL (Pad\'e via Lanczos) family 
of methods~\cite{FeldmannFreund1995,Freund2003}.
Later a number of other efficient MOR approaches have been developed.
These include 
PRIMA methods~\cite{Odabasioglu_ea1998} 
(which can be seen as block Arnoldi process generalization of PVL)
and more advanced algorithms of this type~\cite{Freund2004,Freund2011},
the POD-DEIM group of 
methods~\cite{ChaturantabutSorensen2010_DEIM,ChaturantabutSorensen2012,Petrov2020} and recently proposed
symplectic MOR methods~\cite{Hesthaven_ea2021}.

In this paper an alternative MOR approach for solving~\eqref{mor} is proposed 
which allows to decrease computational costs drastically.
The key issue in the MOR methods is, apparently, how to find a proper 
approximate solution
subspace which would, on one hand, have the smallest possible dimension and, 
on the other hand,
reflect all relevant solution properties of the linear system family being solved.
In our experience (in particular, see numerical results
in Section~\ref{s:exp}), for the problems we aim at, the dimension of appropriate 
MOR solution subspaces in a conventional MOR method is roughly 
proportional to computational costs.
More precisely, to find an appropriate solution subspace of the smallest 
possible dimension~$m$, conventional MOR methods need to solve 
the linear system~\eqref{mor} for about $m$ different parameter
values~$k$.

Our solution approach proposed here is free from this restriction.
It is based on the observation that solution $\bm{x}$ is smooth
with respect to the parameter~$k$.
Indeed, as 
we usually observe in real-life
applications, 
mapping $\bm{x}(k)$ is, at least locally, analytic.
The analyticity assumption allows to propose an efficient solution algorithm which,
to find the MOR solution subspace at low cost,
employs the derivatives 
$\bm{x}(\hat{k})$, $\bm{x}'(\hat{k})$, $\bm{x}''(\hat{k})$, \dots{}
evaluated at some properly chosen value~$\hat{k}$.
This point of view appears to give us two major advantages.
Namely, the first key feature of our approach is that 
all the derivatives $\bm{x}^{(m)}(\hat{k})$
can be computed by a recurrence which involves solving linear systems with 
a single matrix $A_0 + \hat{k}A_1 + \hat{k}^2 A_2$ only, 
which is evaluated at the chosen point~$\hat{k}$.
The second key advantage with our approach is that there is a freedom 
in choosing this specific point~$\hat{k}$.  
In fact, it turns out that a MOR subspace better approximating all the solutions 
in~\eqref{mor} can be built for a~$\hat{k}$ taken outside the interest range 
$[k_{\min},k_{\max}]$, namely, for a complex~$\hat{k}$.  
As we show, taking a complex~$\hat{k}$ is crucial as it allows to have 
a single derivative recursion and, hence, just a single (sparse) LU factorization,
to solve the whole family system~\eqref{mor} for all $k\in[k_{\min},k_{\max}]$.  

Moreover, taking a complex~$\hat{k}$ not only yields better approximation
properties of the MOR basis being constructed but also leads to another
important improvement.  As will be clear from the presentation below,
at least in the application class we consider here, 
the matrices $A_0 + kA_1 + k^2 A_2$ for $k\in[k_{\min},k_{\max}]$ are moderately
ill-conditioned, so that sparse direct solvers can, in principle, 
be successfully employed to solve the systems in~\eqref{mor}.
At the same time, employing preconditioned Krylov subspace iterative solvers 
to solve 
systems~\eqref{mor} appears to be practically impossible due to 
extremely severe spectral properties of the matrices 
$A_0 + kA_1 + k^2 A_2$ for $k\in[k_{\min},k_{\max}]$.
It turns out taking the complex value~$\hat{k}$ (well suitable for the MOR 
basis approximation) also, as a side effect,
dramatically improves spectral properties of the pencil 
$A_0 + \hat{k}A_1 + \hat{k}^2 A_2$, so that, if necessary, 
preconditioned Krylov subspace iterative solvers can successfully be 
employed to solve the systems with the pencil.

If a direct method is used to solve
the linear system at point~$\hat{k}$ then, as appears in all the tests
we have carried out so far, just a single LU factorization suffices
to compute the whole MOR solution basis.  Likewise, in case an iterative method is 
employed for solving the linear system at~$\hat{k}$, just 
a single preconditioner setup is enough to compute the MOR solution basis.
An important ingredient of our approach is that, to avoid eventual
linear dependence of the derivatives $\bm{x}^{(m)}(\hat{k})$
we apply a special orthogonalization procedure.  This is rather similar to 
the Gram-Schmidt orthogonalization applied to the Krylov basis vectors 
in classical Arnoldi (Lanczos) methods.
We call our method \emph{orthonormal derivative basis} (ODB) method.

The structure of this paper is as follows. 
A typical problem setting leading to linear system family~\eqref{mor} 
is given in Section~\ref{s:problem}.
Some possible conventional MOR solution approaches suitable 
to solve systems~\eqref{mor} and 
a regular MOR solver used for comparisons with the new solver
are discussed in Section~\ref{s:POD}.
Our ODB method is presented in Section~\ref{s:ODB}. 
There we present the solution concept using the solution 
derivatives $\bm{x}^{(m)}(\hat{k})$ (Section~\ref{s:ODBder}),
propose an algorithm to recursively construct an orthonormal
basis of 
$\text{span} (\bm{x}(\hat{k}), \bm{x}'(\hat{k}), \dots, \bm{x}^{(m)}(\hat{k}))$
(Section~\ref{s:ODBortho})
and explain how to choose the point~$\hat{k}$ (Section~\ref{s:anal}) properly.
Numerical tests are discussed in Section~\ref{s:exp}
and conclusions are drawn in Section~\ref{s:concl}.

Throughout the paper $I$ denotes the identity matrix of a suitable
dimension and, unless indicated otherwise, $\|\cdot\|$ is the Euclidean vector norm 
(the 2-norm) or the corresponding
operator (matrix) norm.

\medskip

\noindent
\textbf{Highlights}

\begin{enumerate}
\item 
A new ODB (orthonormal derivative basis) MOR approach is proposed 
for solving linear system families, with the system matrix being
a quadratic matrix pencil of a parameter.
The approach allows to drasctically decrease computational costs:
a single LU~factorization suffices to solve a linear system family 
for the whole parameter range.

\item
Our approach is based on two key ideas:\\
(a)~generate an orthonormal MOR solution basis spanned by the derivatives
$\bm{x}(k)$, $\bm{x}'(k)$, $\bm{x}''(k)$, \dots{} of the solution $\bm{x}(k)$
with respect to the parameter~$k$;\\
(b)~this orthonormal derivative basis is computed at a certain complex 
parameter value $\hat{k}$ taken outside the real interest range
$k\in[k_{\min},k_{\max}]$, at a price of just a single LU~factorization.

\item
We show how to choose a single $\hat{k}$ parameter value, where the basis 
is computed, such that it yields a good approximation to solutions $\bm{x}(k)$
for the whole interest range $k\in[k_{\min},k_{\max}]$.
Moreover, for the properly chosen~$\hat{k}$, the linear systems can be efficiently 
solved by the Krylov subspace iterative methods, while this is not the case
for $k\in[k_{\min},k_{\max}]$.
\end{enumerate}

\section{Problem formulation}
\label{s:problem}

One typical application, where linear system families~\eqref{mor} arise,
is electromagnetic scattering in antenna design.   
In this case a usual starting point for deriving~\eqref{mor} is the vector 
Helmholtz equation for the electric field $\bm{E}$,
for which a weak formulation can be given as follows:
\begin{equation}
\label{WeakForm}
\begin{aligned}
\iiint_\Omega & \left[(\nabla\times \bm{v})\cdot(\nabla\times\bm{E})
-k^2\mu\varepsilon(\bm{x})(1-i\tan\delta(\bm{x}))\bm{v}\cdot\bm{E}
\right]\,d\Omega 
\\
&+ ik\mu\sqrt{\frac{\mu_0}{\varepsilon_0}} \sum_{j=1}^{N_p}
\iint_{\Gamma_{\mbox{\scriptsize port}_j}} \frac{1}{Z_{s,j}}
(\bm{n}\times\bm{v})\cdot(\bm{n}\times\bm{E})\,d\Gamma
+ ik\iint_{\Gamma}
(\bm{n}\times\bm{v})\cdot(\bm{n}\times\bm{E})\,d\Gamma 
\\
& \hspace*{15em} = \frac{2ik}{Z_{s,m}}\,\mu\sqrt{\frac{\mu_0}{\varepsilon_0}}
\iint_{\Gamma_{\mbox{\scriptsize port}_m}}
(\bm{n}\times\bm{v})\cdot(\bm{n}\times\bm{E}_0)\,d\Gamma,
\quad\forall\bm{v},
\end{aligned}
\end{equation}
where: $\Omega\subset\bfR^3$ is a bounded computational domain; $\Gamma$ 
is its boundary;
$\bm{E}$ is the unknown complex-valued vector electric field; 
$\bm{v}$ is a function from the test space;
$\Gamma_{\mbox{\scriptsize port}_j}$, $j=1,\ldots,N_p$, are port boundaries 
(with $N_p$ being the number of ports);
$\bm{n}$ is the external normal unit vector to the corresponding boundary;
$i$ is the imaginary unit;
$k=\omega/c$ is the wave number in the vacuum 
(with $\omega$ being the angular frequency of the wave and
$c$ the speed of light in the vacuum); 
$\mu$ is the magnetic permeability 
($\mu_0$ is magnetic permeability in vacuum);
$\varepsilon$ is the dielectric permittivity
($\varepsilon_0$ is dielectric permittivity in vacuum);
$\tan\delta(\bm{x})$ is the dielectric loss tangent;
$Z_{s,j}$ is the sheet impedance of port number~$j$;
$\bm{E}_0$ is given electric source field;
and, finally, $m$ is the number of the excited port.

The linear system family~\eqref{mor} is obtained after
a straightforward discretization of~\eqref{WeakForm}, 
typically done by the vector N\'ed\'elec finite elements~\cite{Nedelec80,Nedelec86}.
Applying the finite element discretization and dividing all the terms in~\eqref{WeakForm}
into three groups, containing respectively zeroth, first and second powers of~$k$,
we arrive at~\eqref{mor}. 
Then, we obtain
\begin{gather*}
A_0\bm{e}\approx 
\iiint_\Omega  \left[(\nabla\times \bm{v})\cdot(\nabla\times\bm{E})\right]\,d\Omega,
\\
A_1\bm{e}\approx 
i\mu\sqrt{\frac{\mu_0}{\varepsilon_0}} \sum_{j=1}^{N_p}
\iint_{\Gamma_{\mbox{\scriptsize port}_j}} \frac{1}{Z_{s,j}}
(\bm{n}\times\bm{v})\cdot(\bm{n}\times\bm{E})\,d\Gamma
+ i\iint_{\Gamma}
(\bm{n}\times\bm{v})\cdot(\bm{n}\times\bm{E})\,d\Gamma 
\\  
A_2\bm{e}\approx 
- \iiint_\Omega  \left[\mu\varepsilon(\bm{x})(1-i\tan\delta(\bm{x}))\bm{v}\cdot\bm{E}
\right]\,d\Omega,
\end{gather*}
where $\bm{e}$ is a vector containing the finite element degrees of freedom
for the electric field~$\bm{E}$.  Thus,
the matrix $A_0$ is a discrete curl-curl operator,
$A_1$ contains all the contributions of the boundary integrals
from the second line in~\eqref{WeakForm},
and $A_2$ is the mass matrix, with real and imaginary parts given by
the matrices $-M$ and $M_\delta$, respectively. 
It is then straightforward to check that $A_{0,1,2}$ satisfy properties~\eqref{mor_prop}.


\section{Conventional MOR methods}
\label{s:POD}

\subsection{Possible regular MOR solution approaches}
Let $\bm{x}_j=\bm{x}(k_j)$ be the solution of~\eqref{mor} for $k=k_j$.
A key assumption, which is characteristic for MOR methods and is also made here, 
is that all the solution vectors $\bm{x}_j$ can be sufficienly well
approximated by elements of a low-dimensional subspace, 
with dimension~$m\ll N$, i.e.,
\begin{equation}
\label{x=Vy}
\bm{x}_j \approx y_1^{(j)}\bm{v}_1 + y_2^{(j)}\bm{v}_2 + \dots +
                 y_m^{(j)}\bm{v}_m = 
V y_j, \quad  j=1,\dots,J,
\end{equation}
where the vectors $\bm{v}_1$, \dots, $\bm{v}_m$ are linearly independent 
columns of $V\in\Cc^{N\times m}$ and $y_i^{(j)}$ are the entries of the 
vector $y_j\in\Cc^m$.
Since it is practical to work with an orthonormal basis,
we assume, without loss of generality, that
$$
V^*V = I\in\Rr^{m\times m}.
$$
To get a working solution algorithm, one can start, for some $V\in\Cc^{N\times m}$, 
with an approximate solution $V y_j\approx\bm{x}_j$ in~\eqref{x=Vy} and extend 
the matrix $V$ by successively adding columns (and thus increasing $m$).
Replacing~$\bm{x}_j$ in the systems~\eqref{mor} by an approximation 
$V y_j\approx\bm{x}_j$ gives to a family of projected overdetermined 
linear systems in~$y=y_j$
\begin{equation}
\label{mor_appr}
(A_0V + kA_1V + k^2 A_2V) y \approx k\bm{b},
\qquad k\in\{k_1,k_2,\dots,k_J\},
\end{equation}
where, for each $k=k_j$, a suitable $y$ has to be chosen.  
This can be done in several natural ways.

First, we can impose the Galerkin condition
\begin{equation}
\label{mor_Gal}
V^*(A_0V + kA_1V + k^2 A_2V) y = V^*(k\bm{b}),
\qquad k\in\{k_1,k_2,\dots,k_J\},
\end{equation}
which means that, for each $k=k_j$, the residuals 
\begin{equation}
\label{resid}  
\bm{r} = k\bm{b} - (A_0V + kA_1V + k^2 A_2V) y   
\end{equation}
are orthogonal to the search subspace $\colspan(V)$, the column space of $V$,
i.e., $V^* \bm{r} = 0$.
Note that the linear systems in~\eqref{mor_Gal} have $m$ equations
and $m$ unknowns.

Another well known approach to choose a solution $y$ in~\eqref{mor_appr} 
is to minimize the residual norms:
\begin{equation}
\label{mor_mr}
\| \bm{r} \| = \| k\bm{b} - (A_0V + kA_1V + k^2 A_2V) y \| 
\rightarrow \min_{y\in\Cc^{{m}}},   
\qquad k\in\{k_1,k_2,\dots,k_J\}.
\end{equation}
In this case, denoting 
$\widehat{y}=\begin{bmatrix} y^T & ky^T & k^2y^T \end{bmatrix}^T\in\Cc^{3{m}}$,
we can write
\begin{equation}
\label{res}  
\bm{r} = k\bm{b} - 
\begin{bmatrix}
A_0V & A_1V & A_2V
\end{bmatrix}\widehat{y}
\end{equation}
and orthonormalize the columns of the matrix 
$\begin{bmatrix} A_0V & A_1V & A_2V \end{bmatrix}$,
thus computing its thin QR factorization
$$
\begin{bmatrix} A_0V & A_1V & A_2V \end{bmatrix} = Q
\begin{bmatrix}R_0 & R_1 & R_2 \end{bmatrix},
\qquad Q^*Q=I.
$$ 
Here $Q\in\Cc^{N\times 3{m}}$, $R_{0,1,2}\in\Cc^{3{m}\times {m}}$,
and the matrix $\begin{bmatrix}R_0 & R_1 & R_2 \end{bmatrix}$ is upper triangular.
Let
\begin{equation}
\label{d}  
d = \arg\min_{d\in\Cc^{3{m}}}\|\bm{b} - Qd\|.
\end{equation}
It is natural to expect (and we observe this in actual computations)
that already at the first steps of constructing the test space 
$\colspan(V)$  
the right hand side vector $\bm{b}$ is in the column space of 
the matrix $\begin{bmatrix} A_0V & A_1V & A_2V \end{bmatrix}$, i.e., 
$$
\bm{b}\in \colspan(\begin{bmatrix} A_0V & A_1V & A_2V \end{bmatrix}) = \colspan(Q).
$$
Then we have $\bm{b}=Qd$ and the minimum in~\eqref{d} is zero.
In any case, for $d$ defined in~\eqref{d}, we have
$$
\begin{aligned}
\|\bm{r}\| &= \left\|Qkd - Q\begin{bmatrix}
R_0 & R_1 & R_2
\end{bmatrix}\widehat{y})\right\| = 
\left\|kd - \begin{bmatrix}
R_0 & R_1 & R_2
\end{bmatrix}\widehat{y}\right\| =
\\
&= \left\|kd - (R_0 + kR_1 + k^2R_2)y\right\|.
\end{aligned}
$$
Thus, the problem of minimizing the residual norm in~$\Cc^N$
is reduced to minimizing the norm of the projected residual
$d - (R_0 + kR_1 + k^2R_2)y \in\Cc^{3{m}}$.  This last problem
can be solved by the regular least squares method.

A third possible way of approximating~\eqref{mor_appr} is, 
after solving systems~\eqref{mor} for some selected values of $k_{\tilde{j}}$, 
to interpolate the vectors $y_j$ from the neighboring vectors $y_{\tilde{j}}$ obtained for the ``solved'' values.    

These three ways to choose~$y$ in~\eqref{mor_appr}, i.e.,
Galerkin~\eqref{mor_Gal}, minimal residual~\eqref{mor_mr} and 
interpolation approaches, show how to \emph{use} the approximate
solution subspace spanned by the columns of~$V$.  
To \emph{extend} the search subspace, we can proceed as follows.
We check, with the current $V$, the residual norms $\|\bm{r}\|$
in~\eqref{resid} for each $k=k_j$, choose $j$ for which the residual
norm is the largest, solve the system~\eqref{mor} for this $j$ and add 
the computed solution to the columns of $V$.
This approach, combined with the Galerkin projection~\eqref{mor_Gal}
and interpolation of the ``unsolved'' vectors $y_j$,
appears to work successfully for the application class we aim at,
and therefore we use this solution approach as a regular MOR solver.

\subsection{The regular MOR solver and its limitations}
An algorithm for the just outlined regular MOR solver, solving linear system 
family~\eqref{mor} for a given relative accuracy tolerance~$\tol$, 
is outlined in Figure~\ref{f:base}.

\begin{figure}
\hrule
\begin{enumerate}
\item 
Find $\bm{x}_1$ and $\bm{x}_J$ by solving~\eqref{mor} for $k_1$ and $k_J$, 
orthonormalize $\bm{x}_1$, $\bm{x}_J$ (by the modified Gram--Schmidt process) 
and set them as columns of $V\in\Cc^{N\times 2}$. Mark $k_1$ and $k_J$ as ``solved''.

\item
Form $W_{0,1,2}:=A_{0,1,2}V$, $H_{0,1,2}=V^*W_{0,1,2}$

\item
For each not ``solved'' $k_j$:
\begin{enumerate}
\item
find $\hat{y}_j$ from 
$(H_0 + k_j H_1 + k_j^2 H_2) \hat{y}_j = V^*(k_j\bm{b})$
\\ 
find $\tilde{y}_j$ by linearly interpolating all $y_i$ with solved $k_i$

\item
for $y_j=\hat{y}_j$ and $y_j=\tilde{y}_j$ 
compute the residual $\bm{r}_j$ of $\bm{x}_j = Vy_j$ as\\  
$\bm{r}_j = k_j\bm{b} - W_0y_j - k W_1y_j - k^2W_2y_j$\\
set either $y_j:=\hat{y}_j$ or $y_j:=\tilde{y}_j$ so that 
$\|\bm{r}_j\|$ is smallest

\item
mark $k_j$ as ``solved'' if $\|\bm{r}_j\|/ \|k_j\bm{b}\| \leqslant \tol$
\end{enumerate}

\item
If all $k_j$ are marked as ``solved'' then STOP

\item
Solve $(A_0+kA_1 + k^2A_2)\bm{x}_j = k_j\bm{b}$ for $j$ with 
the largest $\|\bm{r}_j\|/\|k_j\bm{b}\|$, 
add $\bm{x}_j$ as a column to $V$ and 
apply the Gram-Schmidt process to have $V^*V=I$. Go to step~2.
\end{enumerate}
\hrule

\caption{The regular MOR solver
to solve the system family~\eqref{mor} 
for a given relative accuracy tolerance~$\tol$.
Note that the vectors $\hat{y}_j$, defined at step 3(a) of the algorithm, 
should not be confused with $\widehat{y}$ introduced in~\eqref{res}.
Note that we could, in principle, reformulate
the algorithm to start with a single basis vector $\bm{x}$.  However,
in all real problems we have solved so far, the dimension~$m$ 
of the basis~$V\in\Cc^{N\times m}$ was at least $O(10)$.}
\label{f:base}
\end{figure}


Unfortunately, the regular MOR solver has certain limitations.
From the solver
description it is clear that, to generate 
the search space $\colspan(V)$ of dimension~$m$, the method needs 
to solve $m$ linear systems~\eqref{mor}.  As numerical experiments
show, the dimension of the search space~$V$ produced by the method
is close to optimal in the following sense.  
If the regular MOR solver provides, for a given accuracy tolerance $\tol$,
a solution space $\colspan(V)$ of dimension~$m$ then dimension~$m$
of the solution space can not be reduced without sacrificing the
reached accuracy level.   
Therefore, if a direct sparse linear solver is applied to solve
systems~\eqref{mor} for selected $k_j$ then,
to produce a solution space of dimension~$m$, a sparse LU factorization
has to be carried out $m$~times.

The situation is even more gloomy if, to solve systems in~\eqref{mor},
we attempt to employ an iterative solver.
Indeed, as can be seen in Figures~\ref{f:eig} and~\ref{f:eig_ilu}, 
where typical spectra of the (preconditioned)
matrices $A_0 + kA_1 + k^2 A_2$ are shown, it is hopeless 
to apply modern preconditioned Krylov subspace solvers for this aim, at least
when conventional ILU-based preconditioners are used.
One could have tried to set up an efficient preconditioner
for $A_0 + kA_1 + k^2 A_2$ (such as an ILU factorization
with a very high fill-in level)
at a specific value of $k\in[k_{\min},k_{\max}]$ and 
to apply it for neighboring values of $k$ in the interval.
Unfortunately, as numerical experiments show,
performance of such a preconditioner is reasonable only for $k$ very
close to the $k$~value, where the preconditioner is computed.
The same holds even if the preconditioner is taken to be a
full (sparse) LU factorization computed at a different $k$.

\begin{figure}
\includegraphics[width=0.48\linewidth]{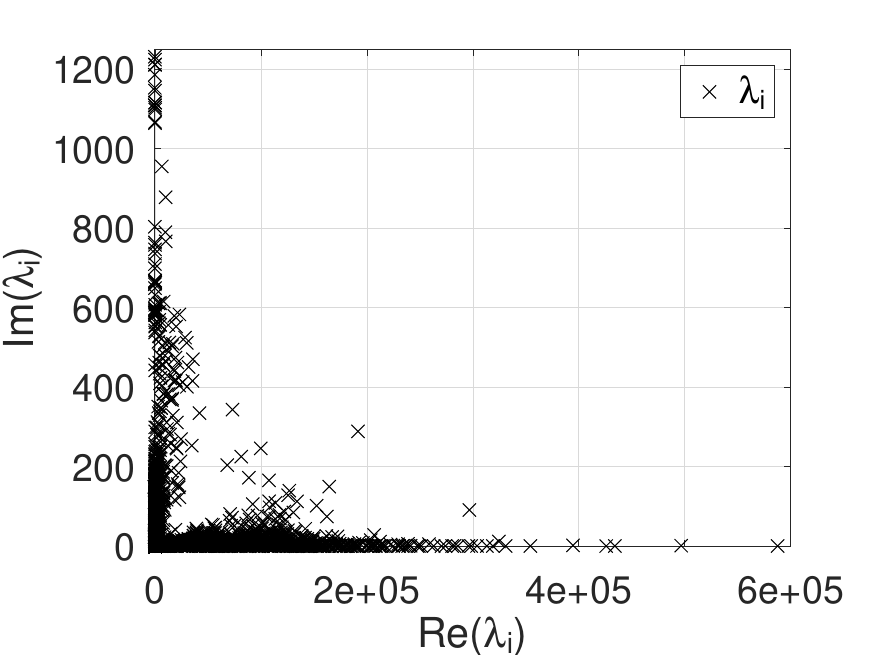}\hfill
\includegraphics[width=0.48\linewidth]{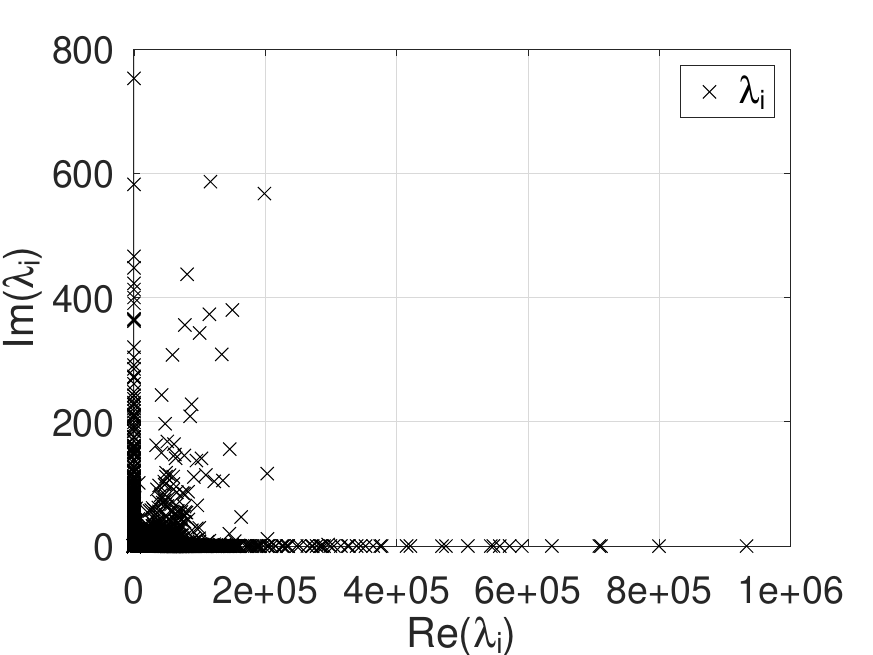}\\
\includegraphics[width=0.48\linewidth]{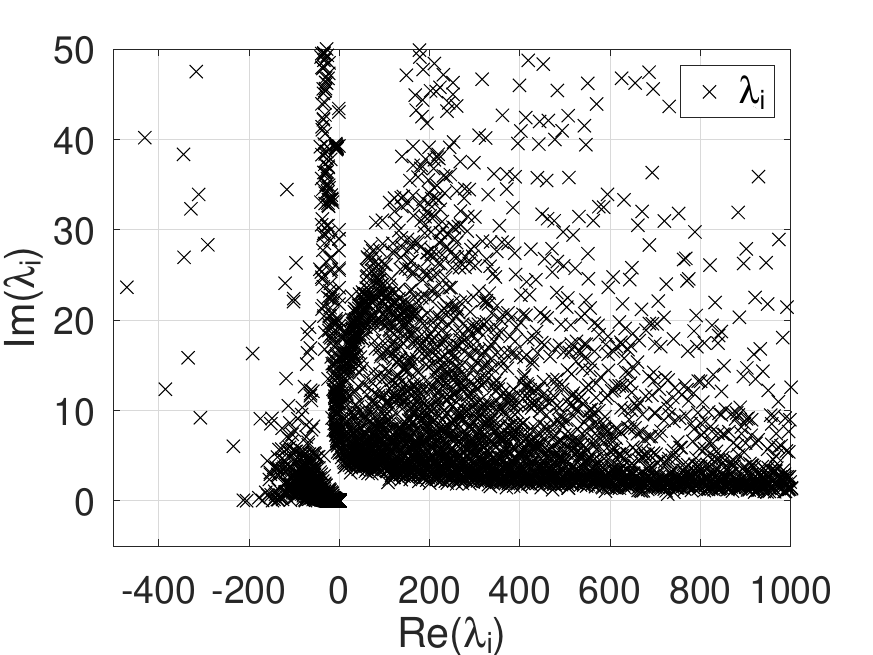}\hfill
\includegraphics[width=0.48\linewidth]{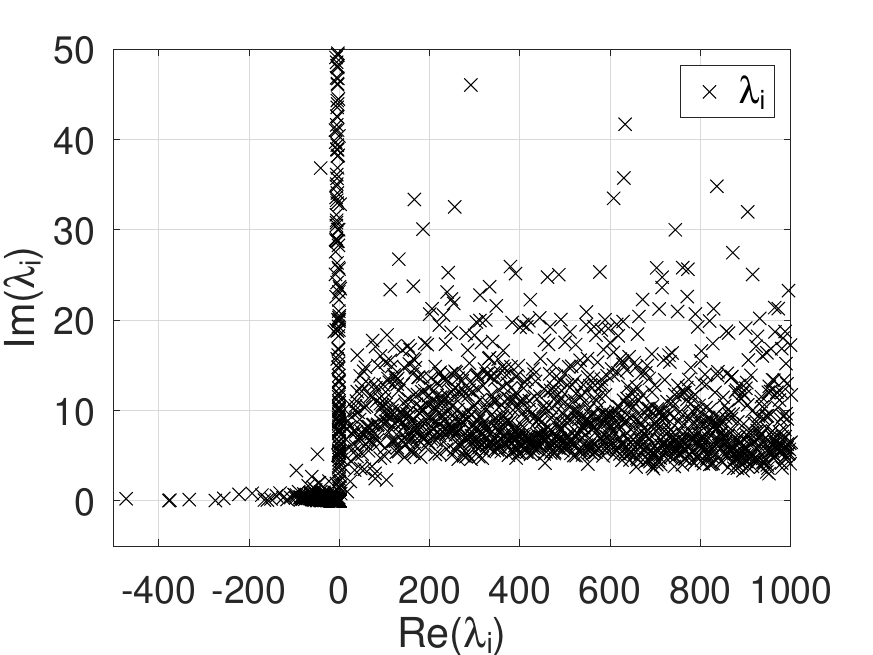}
\caption{The eigenvalues $\lambda_i$ of the matrices 
$A(k_c)=A_0 + k_cA_1 + k_c^2 A_2$
on the complex plane computed at the center point $k_c\equiv(k_{\min}+k_{\max})/2$ 
of the parameter interval~$[k_{\min},k_{\max}]$ for the numerical tests~3 
(top left plot) and~4 (top right plot) described in Section~\ref{s:exp}.
The bottom plots are zooms in the top plots.}
\label{f:eig}  
\end{figure}

Similar limitations are also observed for a MOR solver where 
the minimum residual condition~\eqref{mor_mr} is imposed instead
of the Galerkin condition~\eqref{mor_Gal}.
This indicates that, to design a solver with computational costs 
essentially lower than in the regular MOR solver,
another approach should be followed. 
Such an alternative approach is described in the next section.

\begin{figure}
\includegraphics[width=0.48\linewidth]{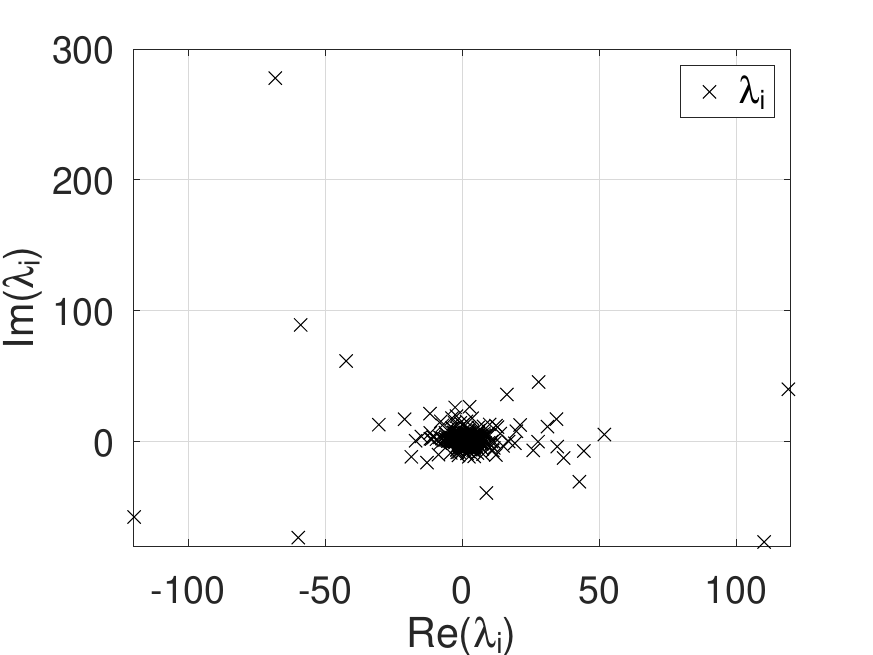}\hfill
\includegraphics[width=0.48\linewidth]{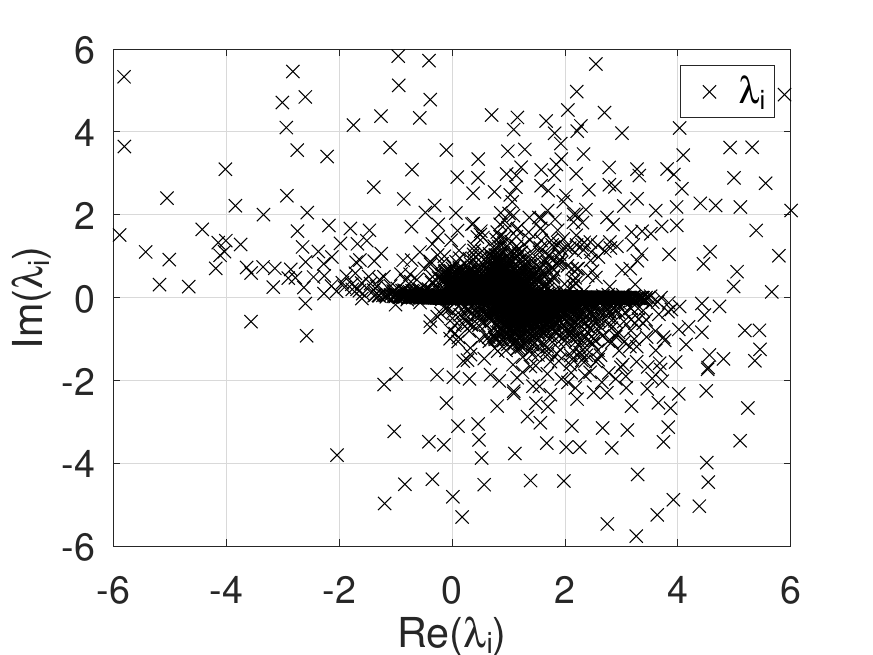}
\caption{Left: the eigenvalues $\lambda_i$ on the complex plane 
of an ILU-preconditioned matrix $A(k_c)=A_0 + k_cA_1 + k_c^2 A_2$,
computed at the center point $k_c\equiv(k_{\min}+k_{\max})/2\in\Rr$ 
for the numerical test~3 (see Section~\ref{s:exp}).
The right plot is a zoom in the left plot. The regular ILU(0)
preconditioner is used.}
\label{f:eig_ilu}  
\end{figure}

\section{ODB: Orthonormal Derivative Basis method}
\label{s:ODB}

\subsection{Derivatives in $k$ and their recursive computation}
\label{s:ODBder}
A natural question which arises and seems to be worthwhile to explore is
whether, 
if a system of family~\eqref{mor} is solved for a certain value~$k$,
we can use derivatives $\bm{x}'(k)$, $\bm{x}''(k)$, \dots{},
along with the solution $\bm{x}(k)$,
to solve~\eqref{mor} for neighboring values~$\tilde{k}$ more efficiently.
The derivatives can be employed, e.g., within a variant of the Hermite 
interpolation.

Indeed, assume for the system family~\eqref{mor} that
\be \label{suppos}
A_0+kA_1+k^2A_2 \quad\mbox{is nonsingular for}\quad 0<k_{\min}\le k\le k_{\max},
\ee
where $k_{\min}$ and $k_{\max}$ are from~\eqref{mor_k}.
Precise conditions which guarantee~\eqref{suppos} are discussed below 
in Section~\ref{s:anal}.
Since nonsingularity condition~\eqref{suppos} implies that
${\bm{x}}(k)$ is analytic on $k_{\min}\le k\le k_{\max}$,
we can differentiate~\eqref{mor} with respect to~$k$.
Repeatedly differentiating~\eqref{mor}, we obtain
\begin{align}
\label{rec1}
& (A_0+kA_1+k^2A_2){\bm{x}}^\prime(k) + (A_1+2kA_2){\bm{x}}(k)={\bm{b}},
\\
\label{recm}
&
\begin{aligned}
(A_0+kA_1+k^2A_2){{\bm{x}}}^{(m)}(k)
 + \binom{m}{1}(A_1+2kA_2) & {{\bm{x}}}^{(m-1)}(k)\\
 + 2\binom{m}{2}A_2        & {{\bm{x}}}^{(m-2)}(k)=0,\qquad
m\ge2,
\end{aligned}
\end{align}
where $\bm{x}^{(m)}(k)$ denotes the $m$th derivative of $\bm{x}(k)$ 
and $\binom{m}{s}$ is the binomial coefficient. 
Relations~\eqref{rec1} and~\eqref{recm} can be rewritten to get 
a recursion for the derivatives
\begin{align}
\label{der1}
& {\bm{x}}^\prime(k) = (A_0+kA_1+k^2A_2)^{-1}
\left[{\bm{b}}-(A_1+2kA_2){\bm{x}}(k)\right],
\\
\label{derm}
& 
\begin{aligned}
{\bm{x}}^{(m)}(k) = -(A_0+kA_1+k^2A_2)^{-1} [ & m(A_1+2kA_2){\bm{x}}^{(m-1)}(k)
\\
 &+ (m-1)mA_2{\bm{x}}^{(m-2)}(k)], \qquad m\ge2.
\end{aligned}
\end{align}
As we see, to compute each derivative, one has to solve a linear system
with the same matrix $A_0+kA_1+k^2A_2$.  
Hence, if a sparse direct linear solver is used to solve the systems 
then just a single sparse
LU factorization suffices to compute all the derivatives.  Likewise,
if an iterative solver is employed then a preconditioner matrix can be
computed just once.  This appears to be crucial for the efficiency of our approach
described below.  

To demonstrate the potential of the Hermite interpolation, 
in~\ref{a:Hermite} we consider a special case of~\eqref{mor}, 
where one of the matrices $A_1$ or $A_2$ is zero.  
As shown there under some additional assumptions, 
in this case Hermite interpolation, approximating $\bm{x}(\tilde{k})$
by linear combinations of 
$\bm{x}(k)$, $\bm{x}'(k)$, $\bm{x}''(k)$, \dots{}, $\bm{x}^{(m)}(k)$,
converges geometrically with growing~$m$ for any allowable $k$ and $\tilde{k}$.  

As numerical experiments presented below show, the positive expectations on the 
Hermite interpolation are not met in practice: we observe an irregular convergence
or no convergence with respect to the derivative order $m$ as well as to
the number of interpolation points.  In the next section we explain
why, in fact, the Hermite interpolation alone can not be a robust solution
approach and propose a way to make it work.
     
\subsection{Orthonormal basis of the derivative subspace}
\label{s:ODBortho}
The unreliable behavior of the algorithm based on the derivatives $\bm{x}^{(m)}(k)$,
which is observed in practice, can be explained 
if we rewrite relations~\eqref{der1} and~\eqref{derm} as
\begin{align} 
\label{x1}
\bm{x}'(k) & = (A_0+kA_1+k^2A_2)^{-1}
\left[{\bm{b}}-(A_1+2kA_2){\bm{x}}(k)\right], 
\\ 
\label{xm}
\frac{ \bm{x}^{(m)}(k)}{m!} & =  -(A_0+kA_1+k^2A_2)^{-1} 
[(A_1+2kA_2) \frac{ \bm{x} ^{(m-1)}(k)}{(m-1)!} 
+ A_2 \frac{ \bm{x} ^{(m-2)}(k)}{(m-2)!} ], \qquad m\ge2, 
\end{align}
and bring the last formula, for $m\ge2$, to a matrix form
\begin{equation}
    \label{x=Bx}
    \begin{bmatrix}
        \bm{x}^{(m)}(k)/m! \\
        \bm{x}^{(m-1)}(k)/(m-1)!
    \end{bmatrix}
    =
    -
    \begin{bmatrix}
        (A_0+kA_1+k^2A_2)^{-1}  & 0     \\
        0                      & I
    \end{bmatrix}
    \begin{bmatrix}
        A_1 + 2 k A_2           & A_2   \\
        -I                      & 0
    \end{bmatrix}
    \begin{bmatrix}
        \bm{x}^{(m-1)}(k)/(m-1)! \\
        \bm{x}^{(m-2)}(k)/(m-2)!
    \end{bmatrix}.
\end{equation}
In other words, 
computing the derivatives $\bm{x}^{(m)}(k)$ can be seen, for $m\ge 2$,
as a power iteration for the matrix 
\begin{equation}
\label{B}  
B = -
    \begin{bmatrix}
        (A_0+kA_1+k^2A_2)^{-1}  & 0     \\
        0                       & I
    \end{bmatrix}
    \begin{bmatrix}
        A_1 + 2 k A_2           & A_2   \\
        -I                      & 0
    \end{bmatrix}.
\end{equation}
This process is well known to converge 
(up to a normalization) to an eigenvector of  
\(B\) corresponding to the largest in magnitude eigenvalue and, thus, 
the vectors $\bm{x}^{(m)}(k)$
tend to become collinear with growing \(m\).
Hence, as we see,
these vectors can not be used as a numerically reliable MOR solution basis.

Fortunately, the well known problem with linear dependence in the power iteration
has a well known remedy: the Arnoldi process,
see, e.g.,~\cite{Saad2003book,vdVorst2003book},
which produces an orthonormal basis of the Krylov subspace generated
by the matrix~$B$.
Note that applying the Arnoldi process is much more numerically stable than computing the vectors $\bm{x}^{(1)}(k)$, \dots, $\bm{x}^{(m)}(k)$
and orthogonalizing them afterwards.

We now describe our Arnoldi procedure to compute an orthonormal basis 
for the subspace spanned by 
$\bm{x}(k)$, $\bm{x}'(k)$, \dots{}, $\bm{x}^{(m)}(k)$ in more detail.
With $\bm{x}^{(-1)}(k)$ formally defined as a zero vector,
we introduce
\begin{equation}
\label{zm}  
\bm{z}_m=
\begin{bmatrix}
\bm{x}^{(m)}(k) \\ \bm{x}^{(m-1)}(k)
\end{bmatrix}, \quad m\ge 0,
\quad\text{with}\quad \bm{x}^{(-1)}(k):=0.
\end{equation}
Rewriting the first recursion step~\eqref{x1} as
\begin{align*}
      \bm{x}'(k) & = (A_0+kA_1+k^2A_2)^{-1}[{\bm{b}}-(A_1+2kA_2){\bm{x}}(k)] \\ 
	         & = \bm{x}(k) - (A_0+kA_1+k^2A_2)^{-1} (A_1+2kA_2){\bm{x}}(k),
\end{align*}
and using~\eqref{B},\eqref{zm}, 
we see that
\begin{equation}
\label{z1}
    \bm{z}_1 = 
    \begin{bmatrix}
        \bm{x}' (k) \\
        \bm{x} (k)
    \end{bmatrix}
    =
    \begin{bmatrix}
        \bm{x} (k) \\
        0
    \end{bmatrix}
    -
    \begin{bmatrix}
        (A_0+kA_1+k^2A_2)^{-1}  & 0     \\
        0                       & I
    \end{bmatrix}
    \begin{bmatrix}
        A_1 + 2 k A_2           & A_2   \\
        -I                      & 0
    \end{bmatrix}
    \begin{bmatrix}
        \bm{x} (k)\\
        0
    \end{bmatrix}
    =
    \bm{z}_0
    +
    B \bm{z}_0.
\end{equation}
Hence, our recursion~\eqref{x1},\eqref{xm} to compute
the derivatives~$\bm{x}^{(m)}(k)$ reads, for the vectors $\bm{z}_m$, as
\begin{equation}
\label{z=Bz}
\bm{z}_1 = \bm{z}_0 + B \bm{z}_0,
\quad
\bm{z}_m = B \bm{z}_{m-1},\quad m=2,3,\dots.  
\end{equation}
It is not difficult to see that replacing \eqref{z=Bz}
by a modified recursion
\begin{equation}
\label{z=Bz1}
\bm{z}_m = B \bm{z}_{m-1},\quad m=1,2,\dots,
\end{equation}
leaves
$\mathrm{span}( \bm{z}_0, \bm{z}_1, \dots , \bm{z}_m )$ unchanged.
In practice, the recursion~\eqref{z=Bz1} is slightly easier to work with,
as there is no need to handle the first recursion step individually.

As numerical experiments show, it turns out to be advantageous, 
in terms of the accuracy reached, to apply the Arnoldi process to a power iteration 
with a modified, balanced and shifted, matrix~$B$.
To balance~$B$, we rewrite it as
\begin{equation*}
\label{B_fact}  
\begin{aligned}
    B &= 
    -
    \begin{bmatrix}
        (A_0+kA_1+k^2A_2)^{-1}  & 0     \\
        0                       & I
    \end{bmatrix}
    \begin{bmatrix}
        A_1 + 2 k A_2           & A_2   \\
        -I                      & 0
    \end{bmatrix}
\\
    &=
    -
    \frac{1}{k}
    \begin{bmatrix}
        (A_0+kA_1+k^2A_2)^{-1}  & 0     \\
        0                       & I
    \end{bmatrix}
    \begin{bmatrix}
        I  & 0     \\
        0  & kI\\
    \end{bmatrix}
    \begin{bmatrix}
        kA_1 + 2 k^2 A_2       & k^2 A_2   \\
        -I                     & 0
    \end{bmatrix}
    \begin{bmatrix}
        I  & 0     \\
        0  & \frac{1}{k} I\\
    \end{bmatrix}
\\
    &=
    -
    \frac{1}{k}
    \begin{bmatrix}
        I  & 0     \\
        0  & kI\\
    \end{bmatrix}
    \begin{bmatrix}
        (A_0+kA_1+k^2A_2)^{-1}  & 0     \\
        0                       & I
    \end{bmatrix}
    \begin{bmatrix}
        kA_1 + 2 k^2 A_2       & k^2 A_2   \\
        -I                      & 0
    \end{bmatrix}
    \begin{bmatrix}
        I  & 0     \\
        0  & \frac{1}{k} I\\
    \end{bmatrix} 
= \frac{1}{k} P C P^{-1}.
\end{aligned}
\end{equation*}
Here
\be \label{defC}
C = -
    \begin{bmatrix}
        (A_0+kA_1+k^2A_2)^{-1}  & 0     \\
        0                       & I
    \end{bmatrix}
    \begin{bmatrix}
        kA_1 + 2 k^2 A_2       & k^2 A_2   \\
        -I                      & 0
    \end{bmatrix},
\qquad
  P = 
    \begin{bmatrix}
        I  & 0     \\
        0  & kI\\
    \end{bmatrix},
\ee
and $C$ can be seen as a balanced version of~$B$.
Note that the matrices $A_1$ and $A_2$ appear in~$C$ 
accompanied by the factors $k$ and $k^2$, respectively,
in the same way as in the original problem~\eqref{mor}.
Thus, working with~$C$, we handle terms of the same order 
of magnitude as in~\eqref{mor}.

The next important observation is that replacing the matrix~$B$ 
in the recursion~\eqref{z=Bz1} by the matrix~$C$,
while simply neglecting $1/k$, $P$, and $P^{-1}$, leads, in exact arithmetic,
to exactly the same MOR solution basis.
Indeed, multiplication with the matrices $P$ and $P^{-1}$
affects only the second half of the recursion vector entries,
whereas the second part in the initial recursion vector $\bm{z}_0$ is zero,
see~\eqref{zm}.
Thus, the MOR solution basis for the subspace 
$\mathrm{span} (\bm{x}(k), \bm{x}'(k), \dots, \bm{x}^{(m)}(k) )$,
formed by the first $N$ entries of $\bm{z}_j$, is left unaffected.

\begin{figure}
\hrule
Given basis dimension $m$, tolerance \tol{} and point~$k=\hat{k}$, 
for $C+I$ defined in~\eqref{CplusI}, carry out the steps:
\begin{enumerate}
    \item Set \(\bm{q}_1 = \begin{bmatrix} \bm{x}(k) \\ 0 \end{bmatrix}\), \(Q_1 = \begin{bmatrix} \bm{q}_1 \end{bmatrix}\), $\jstart=2$, $\jend=m$.
    \item For $j=\jstart,\dots,\jend$
        \begin{enumerate}
            \item Compute \(\bm{p}_j = (C + I) \bm{q}_{j-1}\).
            \item Orthogonalize \(\bm{p}_j\) with respect to \(Q_{j-1}\)
(using, e.g., the Gram--Schmidt process\\ 
\(\hat{\bm{q}}_j = \left(I - Q_{j-1} Q_{j-1}^*\right)\bm{p}_j\)), 
denote the obtained vector as \(\hat{\bm{q}}_j\).
            \item Set $\bm{q}_j = \frac{\hat{\bm{q}}_j}{\|\hat{\bm{q}}_j\|}$, 
                      $Q_j = \begin{bmatrix} Q_{j-1} & \bm{q}_j \end{bmatrix}$.
        \end{enumerate}
    \item Take the first $N$ rows of the matrix \(Q_m\) as the MOR basis matrix $V$,
mark all $k_j$, $j=1,\dots,J$, as ``unsolved'' and carry out the steps~2, 3 of Algorithm in Figure~\ref{f:base} with tolerance~$\tol$.
    \item If all $k_j$ are marked as ``solved'' then STOP;\\ 
         otherwise increase $m:=m+1$, set $\jstart=m$, $\jend=m$ and GO TO step~2 .
\end{enumerate}
\hrule
\caption{The orthonormal derivative basis (ODB) method to solve the system 
family~\eqref{mor} for a given relative accuracy tolerance~$\tol$}
\label{f:odb}
\end{figure}

Finally, in addition to balancing~$B$ in~\eqref{z=Bz1}
by replacing it by the matrix~$C$, in practice, we apply the Arnoldi
process to a recursion with a shifted matrix $C+I$.
As evidenced by numerical tests, this allows in some cases
to attain a higher accuracy than when working with~$C$.
Clearly, due to the shift invariance of Krylov subspaces,
the bases produced for $C + I$ and $C$ are the same in exact
arithmetic.  We have
\begin{multline}
\label{CplusI}
C + I = 
-
    \begin{bmatrix}
        (A_0+kA_1+k^2A_2)^{-1}  & 0     \\
        0                       & I
    \end{bmatrix}
    \begin{bmatrix}
        kA_1 + 2 k^2 A_2       & k^2 A_2   \\
        -I                      & 0
    \end{bmatrix}
    +
    \begin{bmatrix}
        I  & 0     \\
        0                       & I
    \end{bmatrix}\\
    =
    \begin{bmatrix}
        I - (A_0+kA_1+k^2A_2)^{-1}(A_0 + kA_1 + k^2 A_2 - A_0 + k^2 A_2)  & -k^2 A_2     \\
        I                       & I
    \end{bmatrix}\\
    =
    \begin{bmatrix}
        (A_0+kA_1+k^2A_2)^{-1}  & 0     \\
        0                       & I
    \end{bmatrix}
    \begin{bmatrix}
        A_0 - k^2 A_2       & -k^2 A_2   \\
        I                      & 0
    \end{bmatrix},
\end{multline}
where it can checked that the blocks $A_0 - k^2 A_2$  and $-k^2 A_2$ in the second factor of $C+I$ have
a positive definite Hermitian part (recall that $-A_2$ is a mass matrix).

Thus, the observed better accuracy attained when working with $C+I$ 
can probably be attributed to the definiteness of the blocks.
Indeed, this property can, in general, be useful in avoiding
rounding errors caused, for instance, by cancellation 
effects, see, e.g.,~\cite{Higham:Accuracy}.     
The Arnoldi based algorithm for the power iteration with $C+I$, 
which we call the orthonormal derivative basis (ODB) method, is presented
in Figure~\ref{f:odb}.

\begin{remark}
\label{r:svd}
In some cases the MOR basis $V$ delivered by the ODB algorithm has a larger 
dimension than the basis built by the regular MOR algorithm. 
Using conventional techniques,
such as truncated singular value decomposition (SVD), the dimension
of the MOR basis produced by the ODB method can easily be reduced afterwards,
with low computational costs.
A possible algorithm for such a reduction, which is based on truncated SVD
and replaces the computed basis matrix 
$V\in\Cc^{N\times m}$ by $V_{\mathrm{red}}\in\Cc^{N\times\hat{m}}$
with $\hat{m}<m$, is described in~\ref{a:SVD}.   
\end{remark}
 


\subsection{A faster convergence with a complex $k$}
\label{s:anal}
The regular MOR algorithm (cf.\ Figure~\ref{f:base}) builds an orthonormal MOR solution 
basis consisting of the solutions $\bm{x}(k)$
computed at different $k$ within the given range $[k_1,k_J]$. 
In contrast, the ODB algorithm constructs a solution basis
using a solution $\bm{x}(\hat{k})$ and its derivatives $\bm{x}'(\hat{k})$, 
$\bm{x}''(\hat{k})$, \dots{} computed at a single
point $\hat{k}$.
How large the ODB basis should be to 
span approximate solutions 
for all $k\in [k_1,k_J]$ (in other words, how fast the ODB method converges) depends, in general, 
on the point $\hat{k}$ where the ODB basis is computed.  
Observe that this point $\hat{k}$ does not have to belong to the interval $[k_1,k_J]$
and can, in principle, be taken anywhere in the complex plane,
as long as the matrix $A_0+\hat{k}A_1+\hat{k}^2A_2$ is nonsingular.
For which $k$ the matrix $A_0+kA_1+k^2A_2$ is nonsingular is established
by the proposition below.

Consider the numerical range of the
matrix $A({k})=A_0+{k} A_1+{k}^2A_2$, i.e., the set
\[ 
\{ z = (A({k})\bm{x},\bm{x})\in\Cc \; | \; \bm{x}\in\Cc^N,\; 
(\bm{x},\bm{x})=1 \}.
\]
It is well known and easy to check that the numerical range of a matrix contains
all its eigenvalues.  The following proposition holds.
   
\begin{proposition}
\label{prop1}
The numerical range of the matrix $A(k)=A_0+kA_1 + k^2A_2$ does not contain zero 
(and, hence, the matrix $A(k)$ is nonsingular) provided that
\begin{equation}
\label{good_k}
k\in \left\{ \, 
z\in\Cc \; \bigl| \; \mathrm{Im}(z)\leqslant \min\{0, \mathrm{Re}(z)\} 
\;\mathrm{and}\; z\not\in\Rr 
\, \right\}.
\end{equation}
\end{proposition}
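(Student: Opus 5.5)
Take an arbitrary unit vector $\bm{x}$ and compute $z=(A(k)\bm{x},\bm{x})$ using the structure~\eqref{mor_prop}. Set
\[
a=(A_0\bm{x},\bm{x})\ge0,\quad s=(S\bm{x},\bm{x})\ge0,\quad \mu=(M\bm{x},\bm{x})>0,\quad \delta=(M_\delta\bm{x},\bm{x})\ge0 .
\]
These are real numbers because the matrices are real symmetric. With this notation
\[
z = a + iks - k^2\mu + ik^2\delta .
\]
Write $k=\alpha+i\beta$, so that $k^2=\alpha^2-\beta^2+2i\alpha\beta$. Separating real and imaginary parts gives
\[
\mathrm{Re}\,z = a-\beta s-(\alpha^2-\beta^2)\mu-2\alpha\beta\delta,
\qquad
\mathrm{Im}\,z = \alpha s-2\alpha\beta\mu+(\alpha^2-\beta^2)\delta .
\]
The plan is to show that, under~\eqref{good_k}, at least one of these is nonzero. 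We do this with a case split on the sign of $\alpha$. The condition reads $\beta\le\min\{0,\alpha\}$ and $\beta\ne0$, hence $\beta<0$ and $\beta\le\alpha$.

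\emph{Case $\alpha\ge0$.} First take $\alpha>0$. Then $\alpha s\ge0$ and $-2\alpha\beta\mu>0$ (since $\beta<0$ and $\mu>0$). The term $(\alpha^2-\beta^2)\delta$ may be negative when $|\beta|>\alpha$, so the imaginary part alone is not conclusive there. Instead we use the real part. Its terms $a$, $-\beta s$ and $-2\alpha\beta\delta$ are all $\ge0$. When $|\beta|\ge\alpha$ we also have $-(\alpha^2-\beta^2)\mu\ge0$, with strict inequality if $|\beta|>\alpha$. So the two subcases split as follows.
\begin{itemize}
\item If $|\beta|\le\alpha$, then $\mathrm{Im}\,z>0$.
\item If $|\beta|>\alpha$, then $\mathrm{Re}\,z>0$.
\end{itemize}
For $\alpha=0$ we get $\mathrm{Re}\,z=a-\beta s+\beta^2\mu>0$.

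\emph{Case $\alpha<0$.} Now $\beta\le\alpha<0$, so $|\beta|\ge|\alpha|$ and $\alpha^2-\beta^2\le0$. Check the real part term by term:
\begin{itemize}
\item $a\ge0$;
\item $-\beta s\ge0$;
\item $-(\alpha^2-\beta^2)\mu\ge0$;
\item $-2\alpha\beta\delta\le0$, since $\alpha\beta>0$.
\end{itemize}
The last term is the only bad one. The imaginary part is $\mathrm{Im}\,z=\alpha s-2\alpha\beta\mu+(\alpha^2-\beta^2)\delta$, and every term there is $\le0$, with $-2\alpha\beta\mu<0$ strictly. So $\mathrm{Im}\,z<0$ and $z\ne0$.

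We should double check the subcase $\alpha>0$, $|\beta|\le\alpha$: there the imaginary part has terms $\alpha s\ge0$, $-2\alpha\beta\mu>0$ and $(\alpha^2-\beta^2)\delta\ge0$, so it is indeed strictly positive. In every case we get $z\ne0$ for all unit $\bm{x}$, so $0$ is not in the numerical range. Nonsingularity then follows because any eigenvalue $\lambda$ with unit eigenvector $\bm{x}$ satisfies $\lambda=(A(k)\bm{x},\bm{x})$.

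The main obstacle is handling the sign-indefinite $\delta$ terms. The key step is choosing, in each region of $k$, whether $\mathrm{Re}\,z$ or $\mathrm{Im}\,z$ is the one that is definite, and checking that the strict positivity of $M$ always supplies a strictly nonzero term. The boundary case $\beta=\alpha$ with $\alpha<0$ is covered by the $\alpha<0$ argument. The case $\alpha>0$, $|\beta|=\alpha$ is covered by the $\mathrm{Im}\,z>0$ subcase.
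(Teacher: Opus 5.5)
Your proof is correct and follows the paper's argument. The real and imaginary parts of $(A(k)\bm{x},\bm{x})$ are exactly the quadratic forms of the real symmetric matrices $P$ and $Q$ in the paper's splitting $A(k)=P+iQ$, and your case split is the paper's three regions ($0\le\alpha<-\beta$ with $\mathrm{Re}\,z>0$, $0<-\beta\le\alpha$ with $\mathrm{Im}\,z>0$, $\beta\le\alpha<0$ with $\mathrm{Im}\,z<0$), argued pointwise per unit vector instead of through definiteness of $P$ or $Q$ and the bounding rectangle of the numerical range.
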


\begin{proof}
Taking into account~\eqref{mor_prop}, 
for all $k=\alpha + i\beta$, $\alpha,\beta\in\Rr$, we have
\begin{equation}
\label{Ak_re_im}
\begin{aligned}
A(k) &=A_0 + k A_1 + k^2 A_2 = P + i Q, 
\\
P &= P^T = A_0 -\beta S + (\beta^2-\alpha^2)M - 2\alpha\beta M_\delta  \in\Rr^{N\times N},
\\
Q &= Q^T = \alpha S - 2\alpha\beta M + (\alpha^2-\beta^2)M_\delta \in\Rr^{N\times N}.
\end{aligned}
\end{equation}
Since the matrices $P$ and $Q$ are real symmetric, their numerical ranges 
are, respectively, real intervals
\[ 
[\lambda_{\min}(P),\lambda_{\max}(P)]\subset\Rr,
\qquad
[\lambda_{\min}(Q),\lambda_{\max}(Q)]\subset\Rr,
\]
where 
$\lambda_{\min}(X)$ and $\lambda_{\max}(X)$
correspondingly denote the smallest and largest eigenvalues of a matrix $X$.
Therefore, it is easy to check that 
the numerical range of $A(k)$ is contained in a rectangle in the complex plane
$$
[\lambda_{\min}(P),\lambda_{\max}(P)] \times 
i [\lambda_{\min}(Q),\lambda_{\max}(Q)] \subset \Cc.
$$
Hence, to guarantee that the numerical range of $A(k)$
does not contain the origin, it is sufficient to require that
either $P$ or $Q$ are positive or negative definite, i.e.,
at least one of the following four conditions holds:
$$
\lambda_{\min}(P)>0,\quad \lambda_{\max}(P)<0,
\quad
\lambda_{\min}(Q)>0,\quad \lambda_{\max}(Q)<0.
$$
We have $\lambda_{\min}(P)>0$ (i.e., $P$ is positive definite), 
if the following conditions on $\alpha$ and $\beta$ simultaneously hold:
$$
\beta\leqslant 0, 
\quad
-2\alpha\beta\geqslant 0,
\quad
\beta^2 - \alpha^2 > 0, 
$$ 
which are equivalent to
\begin{equation}
\label{Q4a}
0 \leqslant \alpha < -\beta.
\end{equation}
Note that it is, in general, difficult to achieve $\lambda_{\max}(P)<0$  
because $P$ contains a real symmetric positive semidefinite matrix $A_0$
(the stiffness matrix).
Furthermore, it is easy to check that the condition $\lambda_{\min}(Q)>0$ is satisfied
provided
\begin{equation}
\label{Q4b}
0 < -\beta \leqslant \alpha, 
\end{equation}
and the condition $\lambda_{\max}(Q)<0$ holds as soon as
\begin{equation}
\label{Q3b}
\beta \leqslant \alpha < 0.
\end{equation}
Uniting the regions~\eqref{Q4a}, \eqref{Q4b}, and
\eqref{Q3b}, we obtain the sought after condition~\eqref{good_k}.

\end{proof}

\begin{figure}
\centerline{\includegraphics[width=0.5\linewidth]{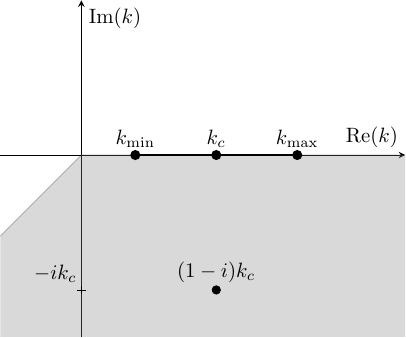}}
\caption{Taking $k\in\Cc$ in the gray region (excluding real nonnegative values $k\geqslant 0$)
leads to a nonsingular matrix $A_0+kA_1 + k^2A_2$,
see relation~\eqref{good_k}.
The problem has to be solved for $k\in[k_{\min},k_{\max}]$.
We suggest to apply the ODB method at $k=\hat{k}\equiv(1-i)k_c$, with $k_c$ being 
the interval center $k_c=(k_{\min}+k_{\max})/2$.}
\label{f:3over8}
\end{figure}

The connected set~\eqref{good_k} where the matrix is guaranteed to be nonsingular 
is shown in Figure~\ref{f:3over8}.
As we see, the interval $[k_1,k_J]$ lies on the boundary of 
this region.
Therefore, taking the point $\hat{k}$, where the ODB method is applied, 
away from the region boundary should
lead to a wider convergence domain in our ODB method. 
This is confirmed in practice.
Indeed, it turns out that in all the numerical tests we carried out 
a good value for $\hat{k}$ is 
\begin{equation}
\label{khat}  
\hat{k}\approx k_c - ik_c,\qquad
k_c=(k_{\min}+k_{\max})/2
\end{equation}
where $k_c$ is the center point of the interval $[k_{\min},k_{\max}]=[k_1,k_J]$ 
and $i$ is the imaginary unit ($i^2=-1$).
With this $\hat{k}$ the ODB method converges in most tests 
considerably better (and never much worse) than for $\hat{k}\in[k_1,k_J]$. 

\begin{remark}
Choosing $\hat{k}\approx k_c - ik_c$ is also advantageous if,
within the ODB method, 
a (properly preconditioned) iterative solver is employed to solve the linear system with 
the matrix $A(\hat{k}) = A_0 + \hat{k} A_1 + \hat{k}^2 A_2$.
Then we typically observe that its convergence is much better for
$\hat{k}\approx k_c - ik_c$ than for $\hat{k}\in[k_1,k_J]$. 
This can be explained by the fact that for 
a $\hat{k}=\alpha + i \beta$ satisfying~\eqref{Q4a}
the matrix $\mathrm{Re}(A(\hat{k}))=P$ (cf.~\eqref{Ak_re_im}) 
is positive definite, so that 
the numerical range of $A(\hat{k})$ lies in the complex right half-plane.
\end{remark}

To illustrate our observations, in Figure~\ref{f:eigC} we plot 
the eigenvalues of the matrices~$A(\hat{k})$ from the numerical
tests~3 and~4 considered below in Section~\ref{s:exp}.
It is instructive to compare these plots with those given in
Figure~\ref{f:eig}.
In addition, the spectral boundaries 
$\min(\mathrm{Re}(\lambda))$, $\max(\mathrm{Re}(\lambda))$, 
$\min(\mathrm{Im}(\lambda))$, $\max(\mathrm{Im}(\lambda))$
for the eigenvalues~$\lambda$ computed at the interval center $k_c$ 
and at the point $\hat{k}=(1-i)k_c\in\Cc$ are listed in Table~\ref{t:eig}.

\begin{figure}
\includegraphics[width=0.48\linewidth]{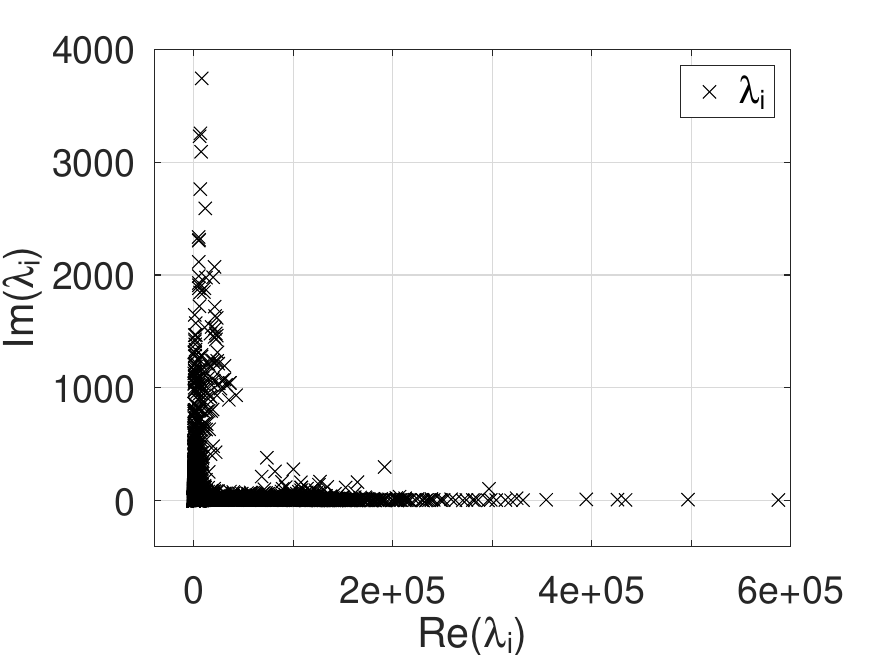}\hfill
\includegraphics[width=0.48\linewidth]{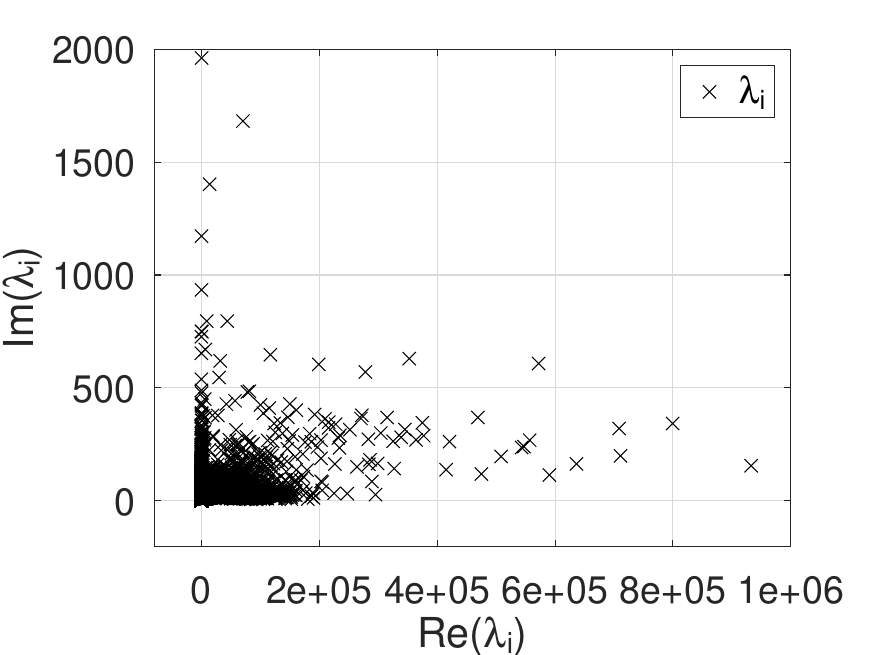}\\
\includegraphics[width=0.48\linewidth]{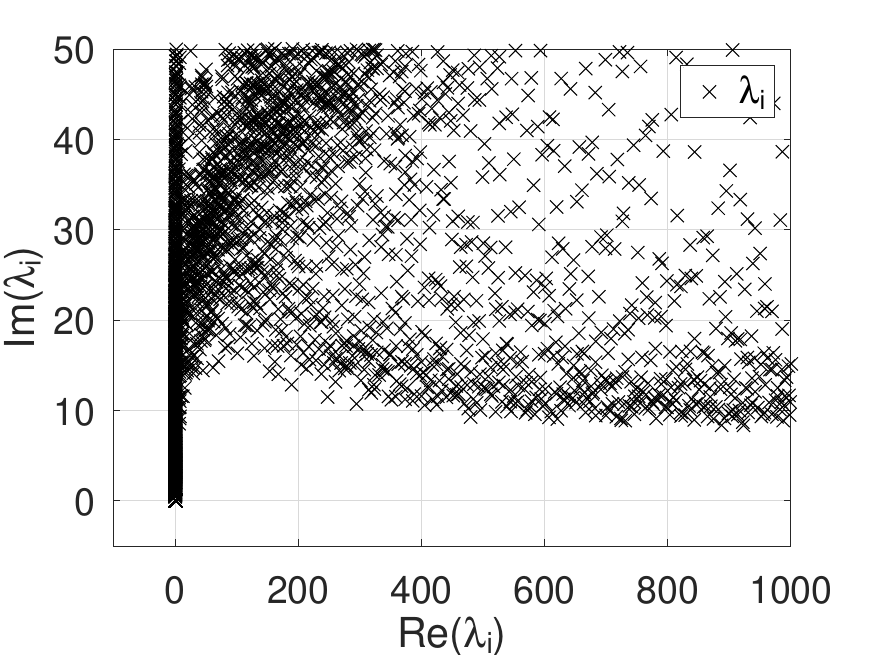}\hfill
\includegraphics[width=0.48\linewidth]{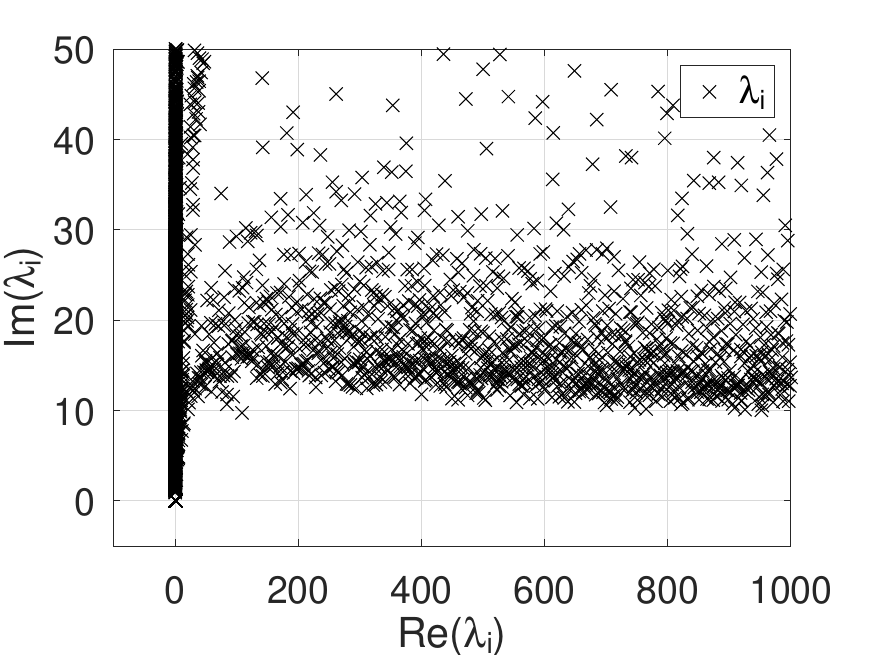}
\caption{The eigenvalues $\lambda_i$ of the matrices 
$A(k_c)=A_0 + k_cA_1 + k_c^2 A_2$
on the complex plane computed at the point $\hat{k}=(1-i)k_c\in\Cc$,
with $k_c\equiv(k_{\min}+k_{\max})/2$, 
for the numerical tests~3 
(top left plot) and~4 (top right plot) described in Section~\ref{s:exp}.
The bottom plots are zooms in the top plots.}
\label{f:eigC}  
\end{figure}

\begin{table}
\caption{Spectral boundaries 
$\min(\mathrm{Re}(\lambda))$, $\max(\mathrm{Re}(\lambda))$, 
$\min(\mathrm{Im}(\lambda))$, $\max(\mathrm{Im}(\lambda))$
for the eigenvalues~$\lambda$ computed at the interval center $k_c$ 
and at the point $\hat{k}=(1-i)k_c\in\Cc$, for test problems~3 and~4
introduced in Section~\ref{s:exp}}
\begin{center}
\begin{tabular}{|c|c|c|}
\hline\hline
test & $k$    & 
$\min(\mathrm{Re}(\lambda))$, $\max(\mathrm{Re}(\lambda))$, 
$\min(\mathrm{Im}(\lambda))$, $\max(\mathrm{Im}(\lambda))$ 
\\\hline
3 & 
$k_c\in\Rr$             &{\tt-9.714e+02}, {\tt5.875e+05}, {\tt0.000e+00}, {\tt1.231e+03}\\
& 
$\hat{k}=(1-i)k_c\in\Cc$& {\tt\ 7.008e-05}, {\tt5.875e+05}, {\tt0.000e+00}, {\tt3.741e+03}
\\\hline
4 & 
$k_c\in\Rr$             &{\tt-9.723e+02}, {\tt9.326e+05}, {\tt0.000e+00}, {\tt7.530e+02}\\
& 
$\hat{k}=(1-i)k_c\in\Cc$& {\tt\ 3.066e-04}, {\tt9.327e+05}, {\tt0.000e+00}, {\tt1.961e+03}
\\\hline  
\end{tabular}
\end{center}
\label{t:eig}  
\end{table}

\section{Numerical experiments}
\label{s:exp}
In this section we compare our ODB method with the regular MOR solver,
described in Section~\ref{s:POD} and outlined in Figure~\ref{f:base}.
This is done on a set of four test problems outlined in Table~\ref{t:tests}.
We run the tests on a Linux PC with a ten-core 3.30GHz~CPU
and 64\,Gb memory,  
in GNU Octave version 9.2.0 (\url{https://octave.org/}). 
Due to the relatively large size of the problems (up to $N=6\,594\,156$),
the Octave codes are implemented with calls to an external direct sparse
LU linear solver.
In all test runs we use a realistic relative accuracy value $\tol=10^{-2}$.

\begin{table}
\caption{An overview of the test problems}
\label{t:tests}
\begin{center}
\begin{tabular}{|c|c|c|c|}
\hline\hline
test   & problem    &  the range $[k_{\min},k_{\max}]$ and \# points~$J$ for the $k_j$ values,  &  application  \\  
        & size, $N$  &  $k_j=k_{\min} + \frac{j-1}{J-1}(k_{\max}-k_{\min})$, $j=1,\dots,J$       
                                      &               \\
\hline
1 & $601\,160$ & $[k_{\min},k_{\max}]=[2.094,418.879] \; (\; = [0.1, 20]\, \text{GHz}\;)$, $J=101$ & antenna design 
\\\hline
2 & $6\,594\,156$ & $[k_{\min},k_{\max}]=[52.988,56.758] \; (\; = [2.52, 2.71]\, \text{GHz}\;)$, $J=201$ & filter design 
\\\hline
3 & $16\,330$ & $[k_{\min},k_{\max}]=[2.094,418.879] \; (\; = [0.1, 20]\, \text{GHz}\;)$, $J=101$ & antenna design 
\\\hline
4 & $19\,674$ & $[k_{\min},k_{\max}]=[2.094,418.879] \; (\; = [0.1, 20]\, \text{GHz}\;)$, $J=101$ & antenna design 
\\\hline
\end{tabular}
\end{center}
\end{table}

The results of the test runs are presented in Table~\ref{t:all}.
In the table, the number of LU~factorizations, iterative steps
and the resulting MOR basis dimensions are shown for both the
ODB and the regular MOR methods.  The residual norm displayed in the
table for both methods is computed as the maximal relative residual norm
of the linear system family~\eqref{mor} for all $k_j$, i.e., as
\begin{equation}
\label{relres}  
\max_{j=1,\dots,J} 
\frac{\|k_j\bm{b} - (A_0+k_jA_1 + k_j^2A_2)Vy_j\|}{\|k_j\bm{b}\|},
\end{equation}
where $Vy_j$ is the computed MOR approximation to the solution~$\bm{x}_j$ 
of system~\eqref{mor} for $k=k_j$, see~\eqref{x=Vy}.
Furthermore, in Table~\ref{t:all} we indicate whether the MOR basis produced
by the ODB method is reduced by truncated SVD, see Remark~\ref{r:svd}.
The truncated SVD basis reduction is described in~\ref{a:SVD} and 
is in all cases run with a tolerance $2\cdot 10^{-5}$
(meaning that the truncated singular values are smaller than the tolerance
times the largest singular value).
We emphasize that the costs for applying the truncated SVD are negligible
and, in practice, such a basis dimension reduction can be applied by
default.
In the last column of Table~\ref{t:all} the CPU gain factor delivered
by the ODB method with respect to the regular MOR method is shown.
To give an impression on how this gain is obtained, we report 
a break-up of the CPU times spent by each solver in test~2.
The regular MOR algorithm carries out a sparse LU~factorization
of the matrix $A_0+k_jA_1 + k_j^2A_2$ for 13~different values of $k_j$, 
with each LU factorization requiring approximately 108~s of CPU time.
Hence, the total CPU time needed by the regular ODB method is a at least
$13\cdot 108\approx 1400$~s.
In contrast, the CPU time in the ODB method is spent mainly for a single 
sparse LU~factorization (computed at $\hat{k}$, cf.~\eqref{khat})
and $m=20$~applications of the computed LU, each costing approximately
2~s of CPU time.  Therefore, the total CPU time of the ODB method is approximately
$108+20\cdot 2=148$~s.  
This gives the CPU time gain factor $1400/148\approx 9.46$ reported in the table.

\begin{table}
\caption{Required number of LU~factorizations, 
MOR subspace dimensions and obtained accuracies in the ODB
and regular MOR solver}
\label{t:all} 
\begin{center}
\begin{tabular}{|c|c|c|c|c|c|c|}
\hline\hline
     & \# LU's     & \# steps     & SVD basis & basis dim.  & res.norm    & CPU time\\
test & ODB /       & ODB /        & reduction & ODB /       & ODB /       & gain  \\
     & regular MOR & regular MOR  & in ODB    & regular MOR & regular MOR & factor\\
\hline
1 & 1 / 11  & 25 / 11 & yes & 20 / 11 & {\tt3.96e-3} / {\tt9.94e-3} & 4.32 \\
\hline
2 & 1 / 13  & 12 / 13 & no  & 12 / 13 & {\tt6.11e-3} / {\tt9.96e-3} & 9.46 \\
\hline
3 & 1 /  6  & 11 / 6  & no  & 11 /  6 & {\tt6.14e-3} / {\tt9.99e-3} & 2.81 \\
\hline
4 & 1 / 10  & 25 / 10 & yes & 12 / 10 & {\tt4.53e-3} / {\tt9.59e-3} & 2.71 \\
\hline
\end{tabular}
\end{center}
\end{table}

In Figures~\ref{f:mod1res}--\ref{f:mod6res} we plot relative residual norms 
$\|k_j\bm{b} - (A_0+k_jA_1 + k_j^2A_2)Vy_j\|/\|k_j\bm{b}\|$
(see~\eqref{relres}) versus the 
frequency range $[\omega_{\min},\omega_{\max}]$ with the frequency values
$\omega$ defined as $\omega = 3k/(20\pi)$.
The plots nicely illustrate the key difference of the two solvers:
while the solution quality delivered by the regular MOR solver essentially 
depends on the number of points where the system family~\eqref{mor} is solved,
the ODB solver is free from this restriction and delivers a solution
basis suitable for the whole frequency range.
Figure~\ref{f:flt5res1} shows convergence of the ODB method for test~2, there
the relative residual norms versus the frequency range are plotted 
for increasing iteration number~$m$.

\begin{figure}
\centerline{\includegraphics[width=0.6\linewidth]{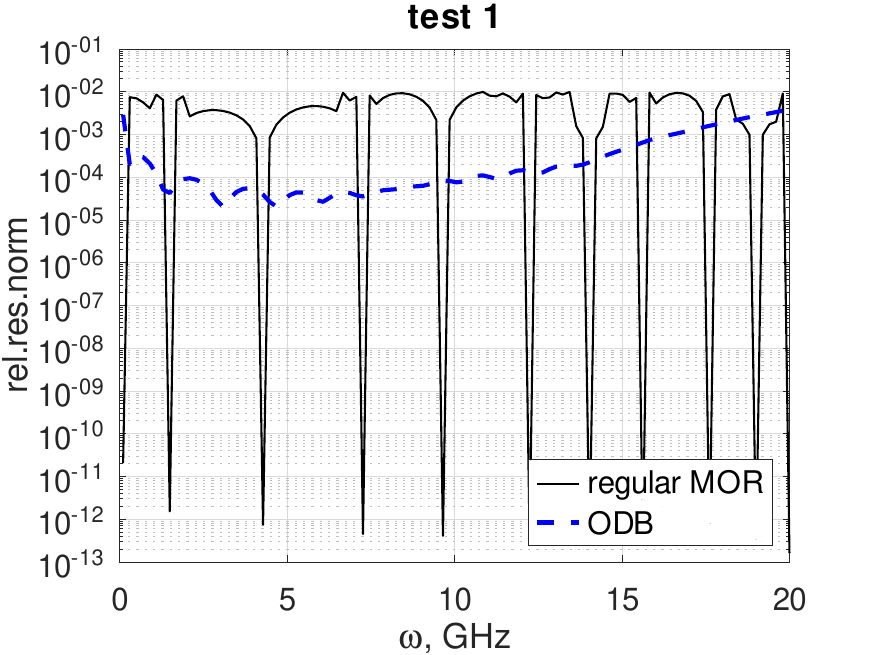}}
\caption{Relative residual norms versus frequency in the regular MOR and 
ODB solvers for test~1.
The ODB solver reaches the required tolerance $10^{-2}$ after 
$m=25$ steps. The ODB basis is then reduced to dimension~20 by SVD
truncated with tolerance $2\cdot 10^{-5}$.}
\label{f:mod1res}  
\end{figure}

\begin{figure}
\centerline{\includegraphics[width=0.6\linewidth]{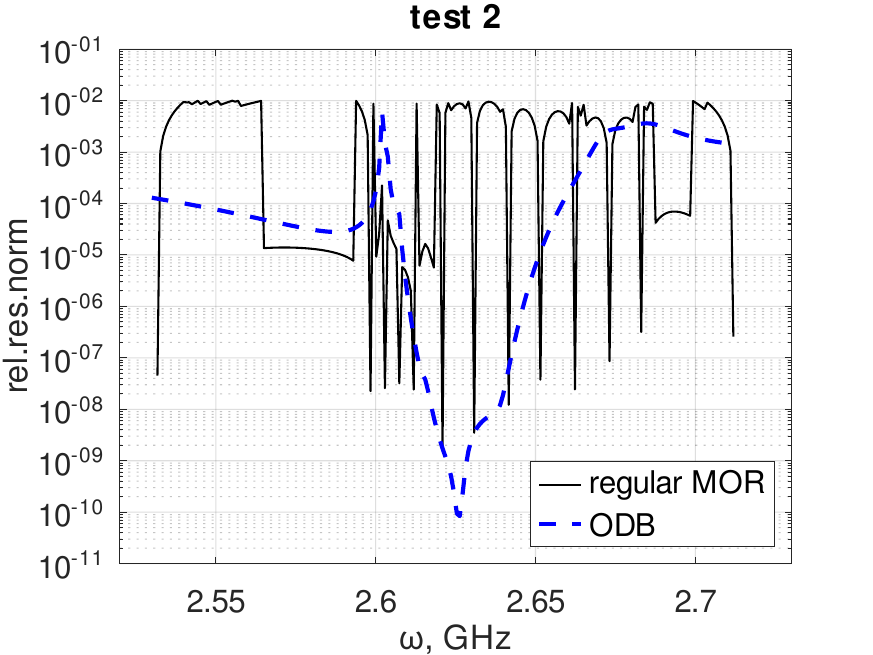}}
\caption{Relative residual norms versus frequency in the regular MOR and 
ODB solvers for test~2.  
The ODB solver reaches the tolerance $10^{-2}$ after 
$m=12$ steps, with no dimension reduction by truncated SVD.}
\label{f:flt5res}
\end{figure}

\begin{figure}
\centerline{\includegraphics[width=0.6\linewidth]{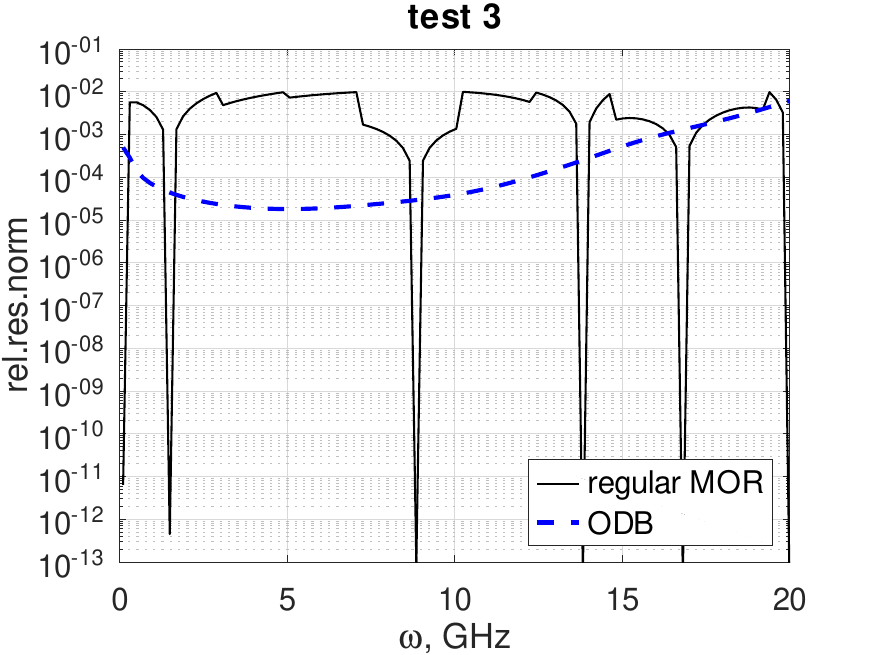}}
\caption{Relative residual norms versus frequency in the regular MOR and 
ODB solvers for test~3.  
The ODB solver reaches the tolerance $10^{-2}$ after 
$m=11$ steps, with no dimension reduction by truncated SVD.}
\label{f:mod3res}  
\end{figure}

\begin{figure}
\centerline{\includegraphics[width=0.6\linewidth]{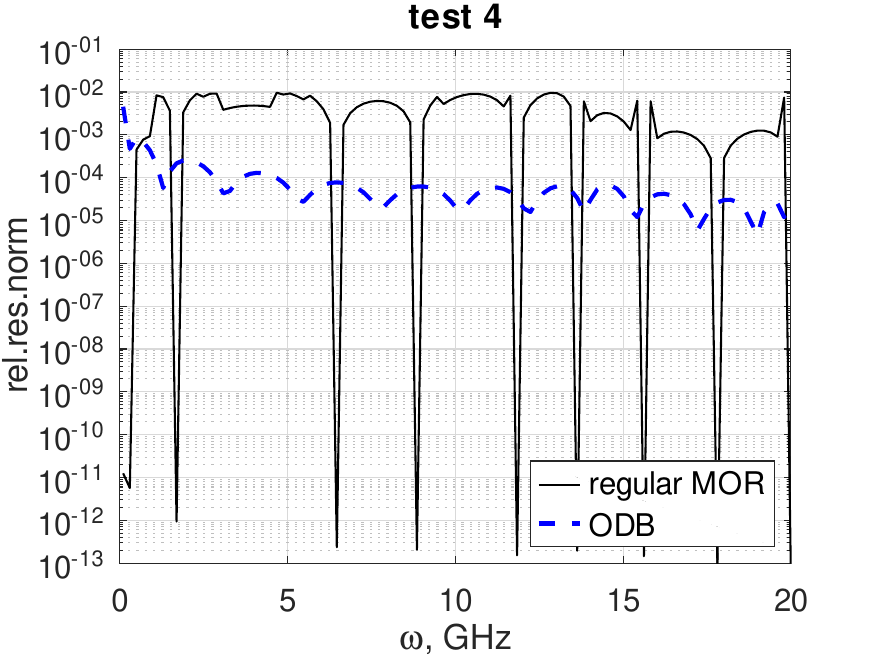}}
\caption{Relative residual norms versus frequency in the regular MOR and 
ODB solvers for test~4.
The ODB solver reaches the required tolerance $10^{-2}$ after 
$m=25$ steps. The ODB basis is then reduced to dimension~12 by SVD
truncated with tolerance $2\cdot 10^{-5}$.}
\label{f:mod6res}  
\end{figure}

\begin{figure}
\centerline{\includegraphics[width=0.6\linewidth]{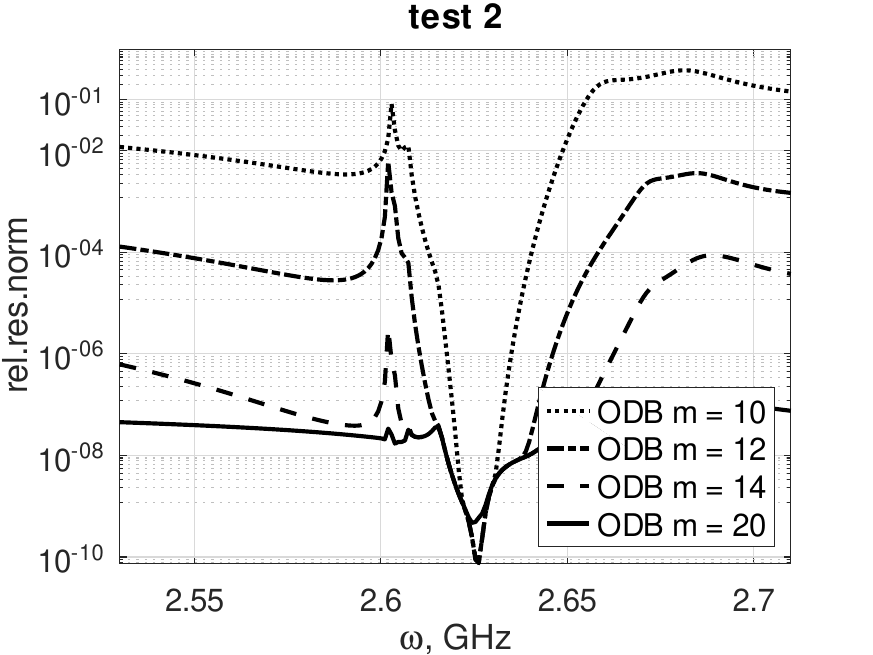}}
\caption{Relative residual norms for the ODB algorithms
versus frequency for different solution subspace dimensions~$m$, test~2}
\label{f:flt5res1}  
\end{figure}

\section{Conclusions and an outlook to future research}
\label{s:concl}
To solve a linear system parametric family, where the system
matrix~$A(k)$ is a quadratic pencil~$A(k)=A_0 + kA_1 + k^2A_2$ in the
parameter~$k$, we have proposed a new efficient
model order reduction (MOR) solution approach.  It is based on
two key ideas:\\
(1)~the MOR solution subspace is formed by an orthonormal basis of
$\text{span} (\bm{x}(\hat{k}), \bm{x}'(\hat{k}), \dots, \bm{x}^{(m)}(\hat{k}))$,
with $\bm{x}^{(m)}(\hat{k})$ being the $m$th order derivative of the
solution $\bm{x}(\hat{k})$;\\
(2)~the parameter value $\hat{k}$, where this orthonormal derivative basis
(ODB) is computed, is chosen in the complex plane outside the interval
$k\in[k_{\min},k_{\max}]\subset\Rr$, where the linear system family has
to be solved.\\
We have showed that the derivatives forming the basis can be computed
recursively, with just a single (sparse) LU~factorization of~$A(\hat{k})$,
and choosing the value~$\hat{k}$ in the complex plane properly gives
good approximation properties of the computed MOR basis.
The proposed approach, called ODB (orthonormal derivative basis)
method, is shown to be very efficient in the tests we have considered far,
reducing the computational costs drastically.

Moreover, for the problem class we consider, the system
matrices~$A(k)$, with $k\in[k_{\min},k_{\max}]\subset\Rr$, have spectral properties
which make solving the systems with~$A(k)$ by conventional
preconditioned Krylov subspace solvers hardly possible.
A pleasant side result of our research is
that spectral properties of the matrix~$A(\hat{k})$
for the chosen complex value~$\hat{k}$ appear to be much more favorable than
those of the matrix~$A(k)$ for $k\in[k_{\min},k_{\max}]\subset\Rr$
and allow to solve systems with~$A(\hat{k})$ by preconditioned
Krylov subspace solvers successively.  

The presented way to choose the parameter value $\hat{k}\in\Cc$
can be improved in a future research.  For instance, the
choice of $\hat{k}\in\Cc$ can be tuned to reach an even faster convergence
in Krylov subspace iterative solvers.  In addition, an interesting research
question is whether this approach (and, in particular, the idea to generate
the subspace for $\hat{k}$ outside the original interest set)
can be useful in further developing the Krylov subspace solver
recycling~\cite{Parks_deSturler_ea2006,Bolten_deSturler_ea2022}. 

\section*{Acknowledgments and declarations}
All the authors contributed equally to the paper.
The work of N.L.~Zamarashkin and D.A.~Zheltkov 
(general guidance, problem setting, introduction, highlights, 
derivation and implementation of the ODB method, conclusions) 
is supported by the Russian Science Foundation project no.~25-11-00392, 
\url{https://rscf.ru/project/25-11-00392/}.
The work of M.A.~Botchev 
(testing conventional MOR methods, making the plots, 
carrying out numerical experiments, writing the manuscript)
is supported by ongoing state research funding of the
Keldysh Institute of Applied Mathematics of Russian Academy of Sciences.
 
The authors declare that no neural network techniques have been used 
for producing results reported in this work and writing the paper.

\bibliographystyle{elsarticle-num}
\bibliography{parts/mor_pap}

\appendix
\section{Geometric convergence of the ODB method in a special case}
\label{a:Hermite} 

Here we consider a special case where $A(k)\equiv A_0+kA_1$ is a linear 
matrix pencil in $k>0$. We also assume that the matrices $A_0$ and $A_1$ are normal 
and commute each other. Then $A_0$ and $A_1$ have a common system of orthonormal eigenvectors $\bm{v}_j\in\Cc^N$, for which
\begin{equation}
\label{decA0A1}
A_0\bm{v}_j=\lambda_j\bm{v}_j, \qquad A_1\bm{v}_j=\mu_j\bm{v}_j,
\qquad \lambda_j,\mu_j\in\Cc.
\end{equation}
We also assume that $\mu_j\neq0$ and all the quotients $\lambda_j/\mu_j$ belong to 
a simply connected compact set in the complex plane not intersecting the negative 
real semi-axis. 
Introducing auxiliary vectors
\begin{equation}
\label{defuk}
\bm{u}(k)=\bm{x}(k)/k, 
\end{equation}
we rewrite our system~\eqref{mor} as $A(k)\bm{u}(k)=\bm{b}$.
For the right-hand side vector $\bm{b}$ written as a decomposition
\[
\bm{b}=\sum_j b_j\bm{v}_j, \qquad b_j\in\Cc,
\]
it is easily seen from \eqref{decA0A1} that
\[
\bm{u}(k)=\sum_j\frac{b_j}{\lambda_j+k\mu_j}\,\bm{v}_j
\]
and, by induction in $m$,
\[
\bm{u}^{(m)}(k)=(-1)^m m! \sum_j\frac{b_j\mu_j^m}{(\lambda_j+k\mu_j)^{m+1}}\,\bm{v}_j.
\]
Let us now choose a fixed value $k$ and try to approximate the vector $\bm{u}(l)$ 
by a linear combination of the vectors $\bm{u}^{(m)}(k)$, $m=0,1,\ldots M$, 
for some $l>0$ and $M$. This leads to an approximation problem
\[
\bm{u}(l) = \sum_j\frac{b_j}{\lambda_j+l\mu_j}\, \bm{v}_j \approx
\sum_j\frac{b_j}{\lambda_j+k\mu_j}
\cdot P\left(\frac{\mu_j}{\lambda_j+k\mu_j}\right) \bm{v}_j,
\]
with $P$ being an $M$th degree polynomial to be choosen. 
Since the eigenvectors $\bm{v}_j$ are orthonormal, this problem
can be uncoupled into a set of approximation problems
\[
\frac{1}{\lambda_j+ l\mu_j} \approx \frac{1}{\lambda_j+k\mu_j} \cdot
P\left(\frac{\mu_j}{\lambda_j+k\mu_j}\right),
\qquad j=1,\dots,N,
\]
or, equivalently, 
\[
P\left(\frac{1}{\lambda_j/\mu_j+k}\right)
\approx \frac{\lambda_j/\mu_j+k}{\lambda_j/\mu_j+l},
\qquad j=1,\dots,N.
\]
The quantities 
\[
\varkappa_j \equiv \frac{1}{\lambda_j/\mu_j+k}
\qquad \text{(thus,}\quad
\frac{\lambda_j}{\mu_j} = \frac{1}{\varkappa_j}-k
\text{)}
\]
belong to a compact set $C$ separated from the set
\[
D \equiv [-\infty,0)\cup[1/k,+\infty).
\]
Then, our approximation problem can be written in terms of $\varkappa_j$ as
\[
P(\varkappa_j) \approx \frac{1}{1-(k-l)\varkappa_j},
\]
or, in a somewhat stronger form, as
\[
P(\varkappa) \approx \frac{1}{1-(k-l)\varkappa}, \qquad \varkappa\in C.
\]
For any $l>0$ the pole $1/(k-l)$ lies in $D$. 
It follows from the theory of Faber series~\cite{Suetin} that for any 
$l>0$ the approximation error decreases geometrically 
with $M$ growing\footnote{Note that our convergence assertion 
can be applied to a class of pencils with arbitrary $N$, 
hence, we can allow $M$ to tend to infinity.}. 
The same, in turn, is valid for the approximation of $\bm{u}(l)$ 
by a linear combination of the vectors $\bm{u}^{(m)}(k)$, $m=0,1,\ldots, M$.

We now turn back to the vector $\bm{x}(k)$ and its derivatives in $k$. 
Since $\bm{x}(k)=k\bm{u}(k)$ (cf.~\eqref{defuk}), we have
\bea \label{recxy}
\bm{x}^{(0)}(k)=\bm{u}^{(0)}(k), \qquad
\bm{x}^{(m)}(k)=m\bm{u}^{(m-1)}(k)+k\bm{u}^{(m)}(k), \quad m\geqslant 1.
\eea
Recurrence \eqref{recxy} implies
\be \label{xyspans}
\mathop{\textrm{span}}\left\{\bm{x}^{(0)}(k),\ldots,\bm{x}^{(M)}(k)\right\}
=\mathop{\textrm{span}}\left\{\bm{u}^{(0)}(k),\ldots,\bm{u}^{(M)}(k)\right\}.
\ee
Since the vectors $\bm{x}(l)$ and $\bm{u}(l)$ are collinear and 
\[
\mathop{\textrm{dist}}\left(\bm{u}(l),
\mathop{\textrm{span}}\left\{\bm{u}^{(0)}(k),\ldots,\bm{u}^{(M)}(k)\right\}
\right) 
\rightarrow 0 \qquad\mbox{as}\ M\to\infty,
\]
equality (\ref{xyspans}) shows that
\[
\mathop{\textrm{dist}}\left(\bm{x}(l),\mathop{\textrm{span}}\left\{\bm{x}^{(0)}(k),\ldots,\bm{x}^{(M)}(k)\right\}\right) \rightarrow 0 
\qquad\mbox{as}\ M\to\infty.
\]

\section{Basis dimension reduction by truncated SVD}
\label{a:SVD}
Here we describe a reduction algorithm, which is based on truncated SVD
and replaces the computed basis matrix 
$V\in\Cc^{N\times m}$ by $V_{\mathrm{red}}\in\Cc^{N\times\hat{m}}$
with $\hat{m}<m$.

Recall that the approximate solution vectors $\bm{x}_j=\bm{x}(k_j)$ can be, for every $k_j$, $j=1,\dots,J$,
expressed as $\bm{x}_j=V y_j$, and the orthonormal columns of $V\in\Cc^{N\times m}$ form
the solution basis.  Let
$$
Y=
\begin{bmatrix}
y_1 & \dots & y_J
\end{bmatrix}\in\Cc^{m\times J}.
$$ 
We reduce the solution basis by truncating the SVD of the matrix $Y$.
Assume that $m\leqslant J$, which is a typical situation (otherwise the formulae below should be
applied to $Y^*$).  Then the full (not truncated) SVD of $Y$ is a factorization 
$$
Y=U_y \Sigma_y V_y^*, \quad
U_y\in\Cc^{m\times m}, \quad
\Sigma_y\in\Rr^{m\times m},\quad
V_y\in\Cc^{J\times m},
$$
where the matrices $U_y$ and $V_y$ have orthonormal columns and the matrix $\Sigma_y$
is diagonal with nonnegative ordered diagonal entries $\sigma_1\geqslant\dots\geqslant\sigma_m\geqslant 0$.
Let, for $\hat{m}\leqslant m$,
$$
\hat{\Sigma}_y = 
\begin{bmatrix}
\sigma_1      &     &        &   \\
              & \sigma_2 &        &   \\ 
              &     & \ddots &   \\
              &     &        & \sigma_{\hat{m}} \\
\end{bmatrix}\in\Rr^{\hat{m}\times \hat{m}},
\qquad
\widehat{\Sigma}_y = 
\begin{bmatrix}
\hat{\Sigma}_y &     &        &   \\
              & 0   &        &   \\ 
              &     & \ddots &   \\
              &     &        & 0 \\
\end{bmatrix}\in\Rr^{m\times m}.
$$
Consider an approximation $\hat{Y}$ to $Y$ defined as
$$
Y\approx \hat{Y} = \hat{U}_y\hat{\Sigma}_y\hat{V}_y^* = U_y\widehat{\Sigma}_yV_y^*,
$$
where the matrices $\hat{U}_y$ and $\hat{V}_y$ are formed by the first $\hat{m}$ columns
of the matrices $U_y$ and $V_y$, respectively.
Let $I_{\hat{m}}$ be the $\hat{m}\times \hat{m}$ identity matrix and let
$$
I_{\mathrm{trunc}} = \begin{bmatrix}
I_{\hat{m}}     &     &        &   \\
              & 0   &        &   \\ 
              &     & \ddots &   \\
              &     &        & 0 \\
\end{bmatrix}\in\Rr^{m\times m}.
$$
It is easy to see that the last $m-\hat{m}$ columns of the matrix $U_yI_{\mathrm{trunc}}$ are zero.
Hence, the last $m-\hat{m}$ columns of the matrix $VU_yI_{\mathrm{trunc}}$ are zero, too.
Let the matrix $V_{\mathrm{red}}\in\Cc^{N\times\hat{m}}$ be formed by the first $\hat{m}$ nonzero columns
of $VU_yI_{\mathrm{trunc}}$.
We can now see that the columns of $V_{\mathrm{red}}$ form a reduced solution basis: 
$$
VY \approx V\hat{Y}= V U_y\widehat{\Sigma}_yV_y^*=V U_yI_{\mathrm{trunc}}\widehat{\Sigma}_yV_y^*
=V_{\mathrm{red}}\hat{\Sigma}_y\hat{V}_y^*=V_{\mathrm{red}}Y_{\mathrm{red}},
$$
where we denote $Y_{\mathrm{red}}=\hat{\Sigma}_y\hat{V}_y^*\in\Cc^{\hat{m}\times J}$.
Thus, we obtain a new reduced solution basis:
\begin{equation}
\label{red_bas}
\bm{x}_j=Vy_j \quad \longrightarrow \quad \bm{x}_j\approx\hat{\bm{x}}_j= V_{\mathrm{red}} (y_{\mathrm{red}})_j,
\end{equation}
where $(y_{\mathrm{red}})_j$ is the $j$-th column of $Y_{\mathrm{red}}$.
This basis reduction introduces an approximation error $\bm{x}_j\approx\hat{\bm{x}}_j$.
The 2-norm of the error can easily be estimated.  Indeed,
\begin{equation}
\label{err_est}
\begin{aligned}
\|\bm{x}_j-\hat{\bm{x}}_j\| &= \|Vy_j - V\hat{y}_j\| = 
\|VU_y\Sigma_y (V_y^*)_j - VU_y\widehat{\Sigma}_y (V_y^*)_j\|=
\\
&=\|\Sigma_y (V_y^*)_j - \widehat{\Sigma}_y (V_y^*)_j\|
=\|(\Sigma_y - \widehat{\Sigma}_y) (V_y^*)_j\| =
\\
&=\left\|
\begin{bmatrix}
0 & \dots & 0 \\
  & \dots &  \\
0 & \dots & 0 \\
- & \sigma_{\hat{m}+1}v_{\hat{m}+1}^* & -\\
  & \dots    &  \\ 
- & \sigma_{m}v_{m}^* & -
\end{bmatrix}
\right\| \leqslant \sigma_{\hat{m}+1},
\end{aligned}
\end{equation}
where $(V_y^*)_j$ is the $j$-th column of $V_y^*$.
In practical computations, when carrying out this basis reduction,
we first normalize the vectors $y_j$, i.e., $y_j:=y_j/l_j$, $l_j=\|y_j\|$,
so that $\|\bm{x}_j\|=\|Vy_j\|=\|y_j\|=1$ (we use the 2-norm here),
and store the solution norms $l_j$.  Therefore, the error bound~\eqref{err_est}
is actually an estimate for the relative solution error $\|\bm{x}_j-\hat{\bm{x}}_j\|/\|\bm{x}_j\|=\|\bm{x}_j-\hat{\bm{x}}_j\|$
as well.  Once the basis reduction is done, we re-normalize the vectors
$y_j:=l_jy_j$.

\end{document}